\newcommand{\FF}{{\mathbb{F}}}
\newcommand{\QQ}{{\mathbb{Q}}}
\newcommand{\ZZ}{{\mathbb{Z}}}
\newcommand{\bC}{{\mathbf C}}
\newcommand{\bG}{{\mathbf G}}
\newcommand{\bH}{{\mathbf H}}
\newcommand{\bL}{{\mathbf L}}
\newcommand{\bT}{{\mathbf T}}
\newcommand{\fS}{{\mathfrak{S}}}
\newcommand{\cE}{{\mathcal E}}
\newcommand{\cS}{{\mathcal S}}
\newcommand{\Ind}{\operatorname{Ind}}
\newcommand{\Inn}{\operatorname{Inn}}
\newcommand{\Irr}{\operatorname{Irr}}
\newcommand{\Aut}{\operatorname{Aut}}
\newcommand{\Out}{\operatorname{Out}}
\newcommand{\Sp}{\operatorname{Sp}}
\newcommand{\Spin}{\operatorname{Spin}}
\newcommand{\SO}{\operatorname{SO}}
\newcommand{\SC}{{\operatorname{sc}}}
\newcommand\RLG{{R_\bL^\bG}}
\newcommand\RtLtG{{R_\tbL^\tbG}}
\newcommand\RTG{{R_\bT^\bG}}
\newcommand\eq{{=}}
\newcommand{\Chevie}{{\sf Chevie}}
\let\eps=\epsilon
\let\de=\delta
\let\la=\lambda
\let\ti=\times
\let\tri=\bigtriangleup
\newcommand{\tG}{\tilde G}
\newcommand{\tbG}{{\widetilde\bG}}
\newcommand{\tL}{\tilde L}
\newcommand{\tbL}{{\widetilde\bL}}
\newcommand{\tchi}{\tilde\chi}
\newcommand{\tpsi}{\tilde\psi}
\newcommand{\trho}{\tilde\rho}
\newcommand{\ts}{\tilde s}
\newcommand{\tw}[1]{{}^{#1}\!}
\newcommand{\twd}{{}^\delta\!}
\newcommand{\twe}{{}^\eps\!}
\newcommand{\Ph}[1]{\Phi_{#1}}
\newtheorem{thm}{Theorem}[section]
\newtheorem{lem}[thm]{Lemma}
\newtheorem{cor}[thm]{Corollary}
\newtheorem{prop}[thm]{Proposition}
\newtheorem*{thmA}{Theorem 1}
\newtheorem*{thmB}{Theorem 2}
\theoremstyle{definition}
\newtheorem{exmp}[thm]{Example}
\theoremstyle{remark}
\newtheorem{rem}[thm]{Remark}
\begin{document}

\title{Cuspidal characters and automorphisms}

\date{\today}

\author{Gunter Malle}
\address{FB Mathematik, TU Kaiserslautern, Postfach 3049,
         67653 Kaisers\-lautern, Germany.}
\email{malle@mathematik.uni-kl.de}
\thanks{The author acknowledges financial support by ERC Advanced Grant 291512
 and by DFG TRR 195.}

\keywords{McKay conjecture, automorphisms, action on cuspidal characters}

\subjclass[2010]{Primary 20C15, 20C33; Secondary 20G40}

\begin{abstract}
We investigate the action of outer automorphisms of finite groups of Lie type
on their irreducible characters. We obtain a definite result for cuspidal
characters. As an application we verify the inductive McKay condition for
some further infinite families of simple groups at certain primes.
\end{abstract}

\maketitle

\section{Introduction}

An important open problem in the ordinary representation theory of finite
groups of Lie type is to determine the action of outer automorphisms on
the set of their irreducible characters, and more generally to determine the
irreducible character degrees of the corresponding almost (quasi-)simple
groups. While the action of diagonal automorphisms and the corresponding
extension problems are well understood by the work of Lusztig \cite{Lu88},
based on the fact that such extensions can be studied in the framework of
finite reductive groups, much less is known in the case of field and also
of graph automorphisms.

The most elusive situation seems to be the one where irreducible characters
not stable under diagonal automorphisms are concerned. In \cite{MS16} we
obtained a certain reduction of this problem to the case of cuspidal characters.
This is the situation we solve here by applying methods and results from
block theory and Deligne--Lusztig theory (see Section~\ref{sec:main}):

\begin{thmA}   \label{thm:main}
 Let $G$ be a quasi-simple finite group of Lie type. For any cuspidal character
 $\rho$ of~$G$ there is a semisimple character $\chi$ in the rational Lusztig
 series of $\rho$ having the same stabiliser as $\rho$ in the automorphism group
 of $G$.
\end{thmA}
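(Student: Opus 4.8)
The plan is as follows. First I would reduce to the case that $\bG$ is simple of simply connected type with a Steinberg endomorphism $F$ and $G=\bG^F$, fix a regular embedding $\bG\hookrightarrow\tbG$ with $\tG=\tbG^F$ together with a group $E$ of field and graph automorphisms, chosen so that they all stabilise a fixed $F$-stable maximal torus inside a fixed $F$-stable Borel subgroup. Then every automorphism of $G$ is induced by the action of $A:=\tG\rtimes E$ (inner and diagonal from $\tG$, field and graph from $E$), so it suffices to produce a semisimple character $\chi$ in the rational series of $\rho$ with $A_\chi=A_\rho$. Since each element of $A$ permutes the geometric Lusztig series of $G$ according to the dual action on geometric conjugacy classes of semisimple elements of $\bG^*$, one has $A_\rho\le A_{(s)}$, the stabiliser of the geometric class $(s)$ of $\rho$; after replacing $A$ by $A_{(s)}$ and choosing $s$ suitably inside its geometric class, we may assume $s$ is fixed by representatives of $E$ and of the relevant part of $\tG/G$.

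Next I would transport the question through Jordan decomposition. Write $\bC=C_{\bG^*}(s)$, with identity component $\bC^\circ$ and component group $A(s)=\bC/\bC^\circ$. Lusztig's Jordan decomposition together with Bonnaf\'e's analysis of groups with disconnected centre parametrises $\cE(G,[s])$ by pairs $(\psi,\nu)$, where $\psi$ is a unipotent character of $\bC^{\circ F}$ and $\nu\in\Irr\bigl(\mathrm{Stab}_{A(s)^F}(\psi)\bigr)$; here the image of $\tG$ in $A$ acts through $A(s)^F$ and the image of $E$ through Steinberg endomorphisms of $\bC^\circ$ commuting with $F$, and the crucial point is that this parametrisation can be arranged $A$-equivariantly. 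Under it, $\rho$ corresponds to a \emph{cuspidal} unipotent character $\psi$ of $\bC^{\circ F}$ (cuspidality being preserved by Jordan decomposition), while the semisimple characters of $\cE(G,[s])$ are exactly those corresponding to the trivial unipotent character $1_{\bC^{\circ F}}$.

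The decisive step will be to exploit the rigidity of cuspidal unipotent characters. By Lusztig's classification these exist only when $\bC^\circ$ lies in a short explicit list of products of simple types, and within a simple factor a cuspidal unipotent character is determined among the cuspidal unipotent characters by its eigenvalue of Frobenius together with its generic degree. Both invariants are preserved by any automorphism commuting with $F$, and moreover $\bC^\circ$ never carries two isomorphic simple factors with distinct cuspidal unipotent characters; hence $\psi$ is fixed by the action of $A(s)^F$ and by the endomorphisms of $\bC^\circ$ induced by $E$. In particular $\mathrm{Stab}_{A(s)^F}(\psi)=A(s)^F$, and $A$ acts on the characters of $\cE(G,[s])$ lying over $\psi$ --- which form the set $\{(\psi,\nu):\nu\in\Irr(A(s)^F)\}$ --- in exactly the same way as on the set $\{(1_{\bC^{\circ F}},\nu):\nu\in\Irr(A(s)^F)\}$ of semisimple characters, namely through the induced action on $\Irr(A(s)^F)$. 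Choosing $\chi$ to be the semisimple character carrying the same label $\nu$ as $\rho$ yields $A_\chi=A_\rho$, and unwinding the reductions gives the statement for $\Aut(G)$.

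I expect the main obstacle to be making the Jordan-decomposition parametrisation genuinely $A$-equivariant --- in particular controlling how the exceptional graph automorphisms in types $B_2$, $G_2$, $F_4$, the various twisted types, and the cocycle describing $A/\tG\cong E$ interact with $\bC$ --- together with the residual, table-based bookkeeping: verifying that cuspidal unipotent characters of the exceptional groups and of the classical groups of the relevant small ranks are genuinely separated by eigenvalue of Frobenius and generic degree, and that the possible factor permutations in $\bC^\circ$ are harmless. Block-theoretic input --- for instance exploiting that $\rho$, being $1$-cuspidal, is the canonical character of its $\ell$-block for a well-chosen prime $\ell$, so that $A_\rho$ equals the stabiliser of that block --- should help to streamline the treatment of the diagonal automorphisms.
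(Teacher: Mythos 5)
Your strategy is essentially the ``obvious'' first attack, and it would succeed if a single unproved assertion in it were available: that Lusztig--Bonnaf\'e's parametrisation of $\cE(G,[s])$ by pairs $(\psi,\nu)$ can be arranged $A$-equivariantly in the strong sense that the induced action of $\gamma\in A$ on the second coordinate $\nu\in\Irr(A(s)^F)$ is \emph{independent of the unipotent datum $\psi$}. You flag this yourself as ``the main obstacle,'' but you then treat it as bookkeeping around graph automorphisms and cocycles, when in fact it is the entire content of the theorem. Knowing that Jordan decomposition commutes with automorphisms in the rough sense does not tell you whether, say, a graph-field automorphism that fixes $\psi$ acts on the $\nu$-labels over $\psi$ by the same twist as on the $\nu$-labels over $1_{\bC^{\circ F}}$ --- and it is precisely the characters not stable under diagonal automorphisms that the paper identifies as ``the most elusive situation.'' At the time of writing, such a uniform equivariant Jordan decomposition was established only for diagonal automorphisms (Lusztig) and, via \cite{CS15}, \cite{CS16}, \cite{Tay16}, for types $A$, $\tw2A$ and $C$. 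Assuming it in general begs the question.

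What the paper actually does is different in two structural respects. First, the only equivariance input it uses is Taylor's \cite[Thm.~9.5]{Tay16}, that the $\RLG$-induced correspondence between $\cE(G,s)$ and $\cE(L,s)$ for the \emph{minimal} $F$-stable Levi $\bL^*\supseteq C_{\bG^*}(s)$ commutes with $\Aut(G)_s$; this reduces to quasi-isolated series but says nothing about the full descent to unipotent characters of $C_{\bG^*}(s)$. Second, and this is where the work is, in the quasi-isolated case the paper does not attempt to identify the action on $\nu$-labels abstractly; it instead connects $\rho$ to the semisimple character by an explicit chain --- either through non-exceptional nodes of $\ell$-Brauer trees for well-chosen Zsigmondy primes (so Feit's theorem forces any automorphism fixing one endpoint to fix the other), or through shared constituents of Lusztig-induced characters $\RLG(\psi_i)$ with multiplicity $\pm1$ (Lemma~\ref{lem:mult 1} and Remark~\ref{rem:lem2.4}), supplemented in types $E_6$, $\tw2E_6$ by a group-theoretic rigidity argument (Lemma~\ref{lem:trivi}). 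Your observation about $\rho$ being the canonical character of a cyclic block is genuinely in the right spirit, but in the paper it is the \emph{primary} mechanism, not a streamlining device. To repair your proposal you would either need to prove the strong $A$-equivariance claim (which, as explained, is not lighter than the theorem), or replace it by a case-by-case device like the paper's Brauer-tree / Deligne--Lusztig chains.
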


Observe that the action on semisimple characters is well-understood by the
theory of Gelfand--Graev characters, see \cite{Sp12}. For linear and unitary
groups this result was obtained by Cabanes and Sp\"ath \cite{CS15} and we use
it in our proof, for symplectic groups it follows from recent work of
Cabanes--Sp\"ath \cite{CS16} and of Taylor \cite{Tay16}; for the other types
it is new. For the proof we first consider quasi-isolated series, see
Sections~\ref{sec:class} and~\ref{sec:exc}. Here, we connect $\rho$ to $\chi$
either via a sequence of Brauer trees, in which case we also obtain information
on maximal extendibility (see Corollary~\ref{cor:ext-simple}), or of
Deligne--Lusztig characters.
\smallskip

As an application we verify the inductive McKay condition for some series of
simple groups of Lie type and suitable primes $\ell$ (see
Section~\ref{sec:ind McKay}):

\begin{thmB}   \label{thm:main2}
 Let $q$ be a prime power and $S$ a finite simple group $\tw2E_6(q)$,
 $E_7(q)$, $B_n(q)$ or $C_n(q)$. Let $\ell\equiv3\pmod4$ be a prime with
 $\ell|(q^2-1)$. Then $S$ satisfies the inductive McKay condition at $\ell$.
 In particular, the inductive McKay condition holds for $S$ at $\ell=3$.
\end{thmB}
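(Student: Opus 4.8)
The plan is to verify the criterion for the inductive McKay condition recalled in Section~\ref{sec:ind McKay}. Write $S=G/\ZZ(G)$ with $G$ the universal covering group of $S$; for the four families and the primes in question $G=\bG^F$ for a simple simply connected algebraic group $\bG$ over $\overline{\FF}_q$ with Steinberg endomorphism~$F$, apart from finitely many small~$q$ that I would treat by hand. Fix a Sylow $\ell$-subgroup $P\le G$ and let $\tbG$ be a regular embedding of $\bG$, so that $\tbG^F$ induces the diagonal automorphisms of~$G$. It then suffices to produce a bijection $\Omega\colon\Irr_{\ell'}(G)\to\Irr_{\ell'}(N_G(P))$ that is equivariant for $\Aut(G)_P$, commutes with $\ZZ(G)$-multiplication, is $\tbG^F$-compatible on both sides, and along which $\chi$ and $\Omega(\chi)$ extend to their stabilisers in $G\rtimes(\tbG^F\rtimes\Aut(G))$ with matching cohomology classes. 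That a McKay bijection \emph{exists} for $G$ at primes $\ell$ with $e:=\mathrm{ord}_\ell(q)\in\{1,2\}$ is already known from $e$-Harish-Chandra (generic block) theory together with the description of $\Irr_{\ell'}(G)$ via Jordan decomposition; the content is the equivariance under the automorphisms that are not inner-diagonal and the extendibility/cohomology requirements. I expect the hypothesis $\ell\equiv3\pmod4$ to be used mainly to force a uniform shape for the relevant relative Weyl groups and component groups, and in particular for the small-rank Brauer trees entering the quasi-isolated series in Sections~\ref{sec:class} and~\ref{sec:exc}.

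First I would fix $\Omega$. Jordan decomposition partitions $\Irr_{\ell'}(G)$ into rational Lusztig series $\cE(G,s)$ indexed by semisimple $\ell'$-classes $(s)$, and $e$-Harish-Chandra theory matches the $\ell'$-characters of each series with those of $N_G(P)$ in the corresponding block, the local side being read off from the normaliser of a Sylow $\Phi_e$-torus and Clifford theory over its abelian part. Equivariance under $\tbG^F$ follows from Jordan decomposition being $\tbG^F$-compatible (up to the familiar twist) together with Clifford theory for the abelian quotient $\tbG^F/G$; field automorphisms permute the series $\cE(G,s)$ in the evident way and act inside a series through a known permutation of unipotent characters. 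The step that is not formal is the comparison of the automorphism action on the \emph{cuspidal} constituents occurring in the relevant $e$-series with the action on the $N_G(P)$-side, and this is exactly where Theorem~\ref{thm:main} enters: it lets me replace each such cuspidal character by a semisimple character in the same rational series \emph{without changing its stabiliser in $\Aut(G)$}, after which the action is read off from Gelfand--Graev theory as in \cite{Sp12} and matched with the corresponding local characters.

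It then remains to settle maximal extendibility and the compatibility of cohomology classes. For these four families $\Out(S)$ is abelian, the product of a cyclic group of field automorphisms with a cyclic group of diagonal automorphisms (of order dividing $2$, and of order dividing $3$ for $\tw2E_6$); so extendibility across the field part, which contributes only a cyclic over-group of the stabiliser, is automatic, while the genuine content is extendibility across $\tbG^F$ and the coherence of the two — and that is supplied for semisimple characters by the Gelfand--Graev machinery and, in the quasi-isolated Brauer-tree cases, by the extendibility statement of Corollary~\ref{cor:ext-simple}. The same analysis applies on the local side, where $N_G(P)$ has transparent structure over a normal abelian $\ell$-subgroup. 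Assembling these verifies the criterion and hence the inductive McKay condition at~$\ell$; specialising to $\ell=3$, which satisfies $3\equiv3\pmod4$ and $3\mid q^2-1$ for every prime power~$q$, yields the final sentence.

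The main obstacle is the equivariance under automorphisms of order~$\ell$: the field automorphism of order~$\ell$ when $\ell$ divides the order of the group of field automorphisms, and the diagonal automorphism of order~$3$ for $S=\tw2E_6(q)$ with $3\mid q+1$. For characters not fixed by all diagonal automorphisms these are precisely the \emph{elusive} configurations highlighted in the introduction, about which the Harish-Chandra bookkeeping alone is silent; Theorem~\ref{thm:main} is the device that reduces the cuspidal case to a statement about semisimple characters, where it can be settled. A secondary difficulty is that for $\ell=3$ the group $P$ need not be abelian (for instance $3\mid|W|$ in $B_n(q)$, $C_n(q)$ once $n$ is large), so $N_G(P)$ is not simply a torus normaliser and one must check that $\Omega$, together with all equivariance and cohomology data, descends correctly through the ambient Clifford theory; doing this for each of the four families, and disposing of the finitely many small~$q$ where $\bG^F$ fails to be the full covering group or where exceptional isogenies intervene, is the residual technical work.
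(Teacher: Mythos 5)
Your proposal misreads what kind of proof this theorem needs. The paper's argument is a short \emph{reduction} to \cite[Thm.~6.4]{MS16}, not an independent construction of a McKay bijection together with the verification of its equivariance, extendibility and cohomology requirements. Most of what you describe --- building $\Omega$ via $e$-Harish-Chandra theory, matching $\Irr_{\ell'}(G)$ with $\Irr_{\ell'}(N_G(P))$, tracking Clifford theory over $\tG/G$ and over the local abelian quotient --- is precisely the machinery that \cite[Thm.~6.4]{MS16} already encapsulates, and redoing it here would duplicate that paper rather than prove the present statement. The actual proof splits into two cases by $d_\ell(q)\in\{1,2\}$: if $\ell\mid(q-1)$ everything is literally \cite[Thm.~6.4(a)]{MS16}; if $q\equiv-1\pmod\ell$ then one invokes \cite[Thm.~6.4(c)]{MS16}, whose applicability requires either that $D/\Inn(G)$ be cyclic and every $\tchi\in\Irr(\tG\mid\Irr_{\ell'}(G))$ satisfy $(\dagger)$ (for $\tw2E_6$, this is exactly Proposition~\ref{prop:dagger 2E6}), or that $\Out(G)$ itself be cyclic (for $E_7$, $B_n$, $C_n$). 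You nowhere isolate the two citable inputs \cite[Thm.~6.4(a)]{MS16} and \cite[Thm.~6.4(c)]{MS16}, nor the role of Proposition~\ref{prop:dagger 2E6}, and as a result your outline has no visible endpoint.

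You also misidentify where the hypothesis $\ell\equiv3\pmod4$ is used and get a structural fact about $\Out$ wrong. The congruence is not there to ``force a uniform shape for the relevant relative Weyl groups''; it is used, in the case $q\equiv-1\pmod\ell$, to force $q$ not to be a perfect square (if $q=q_0^2$ then $\operatorname{ord}_\ell(q_0)\in\{1,2,4\}$, and $4\nmid\ell-1$ rules out $4$ while $1,2$ contradict $q\equiv-1$), hence to kill all even-order field automorphisms so that $\Out(G)$ is cyclic for $E_7$, $B_n$, $C_n$ --- precisely the hypothesis of \cite[Thm.~6.4(c)]{MS16}. Finally, your claim that $\Out(S)$ is abelian for all four families is false for $\tw2E_6(q)$ in the relevant case $3\mid(q+1)$: there $\Out(G)\cong\fS_3\times C_f$, with the graph-field automorphism and the diagonal automorphisms generating the non-abelian $\fS_3$ factor. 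That non-commutativity is exactly why the $\tw2E_6$ case needs the extra analysis in Proposition~\ref{prop:dagger 2E6} (built on Lemma~\ref{lem:trivi}, Theorem~\ref{thm:qi-exc} and L\"ubeck's data) rather than the cyclic-$\Out$ shortcut that disposes of the other three families.
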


This is a contribution to a programme to prove McKay's 1972 conjecture on
characters of $\ell'$-degree based on its reduction to properties of
quasi-simple groups, an approach which has recently led to the completion of
the proof in the case when $\ell=2$ (see \cite{MS16}).
\bigskip

\noindent{\bf Acknowledgement:} I thank David Craven for his
remark that the consideration of Brauer trees might be useful, and the
Institut Mittag-Leffler, where this remark was made, for its hospitality.
I also thank Frank L\"ubeck for help in compiling Table~\ref{tab:qi-class},
Britta Sp\"ath and Jay Taylor for their comments on an earlier version,
and the anonymous referee for helpful comments which led to a substantial
clarification of several proofs.

\section{Cuspidal characters in classical groups}   \label{sec:class}

Throughout the paper we fix the following notation. We let $\bG$ be a simple
simply connected linear algebraic group over an algebraically closed field of
characteristic~$p$ with a Frobenius map $F:\bG\rightarrow\bG$ inducing an
$\FF_q$-structure, and we set $G:=\bG^F$ the finite group of fixed points
under~$F$. It is well-known that $G$ then is
a finite quasi-simple group in all but finitely many cases (see
\cite[Thm.~24.17]{MT}), and furthermore all but finitely many quasi-simple
finite groups of Lie type can be obtained as $G/Z$ for some suitable central
subgroup $Z\le Z(G)$ (the exceptions being the Tits simple group and a few
exceptional covering groups, see e.g.~\cite[Tab.~24.3]{MT}). For us, a
``finite group of Lie type'' is any nearly simple group whose non-abelian
composition factor is neither sporadic nor alternating.

Let $\bG\hookrightarrow\tbG$ be a regular embedding; thus $\tbG$ is a connected
reductive group with connected centre and derived subgroup equal to $\bG$.
For an extension $F:\tbG\rightarrow\tbG$ of the Frobenius map on $\bG$ we let
$\tG:=\tbG^F$. We choose a group $\tbG^*$ dual to
$\tbG$, with corresponding Frobenius map again denoted by $F$, and an
epimorphism $\pi:\tbG^*\rightarrow\bG^*$ dual to the regular embedding
$\bG\hookrightarrow\tbG$, and we write $\tG^*=\tbG^{*F}$ and $G^*=\bG^{*F}$
for the $F$-fixed points. Throughout, for closed $F$-stable subgroups $\bH$
of $\bG$, $\tbG$, $\bG^*,\ldots$ we will write $H:=\bH^F$ (in roman font) for
their group of fixed points.

Let us recall the description of automorphisms of a finite simple group of
Lie type: any automorphism of $S=G/Z(G)$ is a product of an inner, a diagonal,
a graph and a field automorphism. Here, the \emph{diagonal automorphisms} are
those induced by the embedding $G\hookrightarrow\tG$, the \emph{graph
automorphisms}
come from symmetries of the Dynkin diagram of $\bG$ commuting with $F$ and
\emph{field automorphisms} are induced by Frobenius maps on $\bG$ defining
a structure over some subfield $\FF_{q'}$ of $\FF_q$ some power of which is $F$
(see e.g.~\cite[Thm.~24.24]{MT}).

\subsection{Cuspidal unipotent characters on Brauer trees}
One crucial tool in our determination of the action of automorphisms is the
observation that cuspidal unipotent characters of classical groups lie in
blocks of cyclic defect for suitable primes. Lusztig gave a parametrisation
of the unipotent characters of groups $G$ of classical type in terms of
combinatorial objects called \emph{symbols}. According to this classification a
classical group has at most one cuspidal unipotent character, as recalled in
Table~\ref{tab:cusp-class},  which is thus in particular fixed by all
automorphisms of $G$. (The parameter $d_G$ occurring in the table will be used
in the statement of Lemmas~\ref{lem:def0} and~\ref{lem:in cyclic block}.)

\begin{table}[htbp]
\renewcommand{\arraystretch}{1.3}
$$\begin{array}{|l|ccccc|}
\hline
 G& A_{n-1}& \tw2A_{n-1}& B_n,C_n& D_n& \tw2D_n\\
\hline
 n& 1& a(a+1)/2& a(a+1)& a^2\text{ even}& a^2\text{ odd}\\
\text{label}& ()& (a,a-1,\ldots,1)& \binom{0\,\ldots\,2a}{-}& \binom{0\,\ldots\,2a-1}{-}& \binom{0\,\ldots\,2a-1}{-}\\
 d_G& -& 2(2a-1)& 4a& 2(2a-1)& 2(2a-1)\\
\hline
\end{array}$$
 \caption{Cuspidal unipotent characters in classical groups}   \label{tab:cusp-class}
\end{table}

The symbols parametrising unipotent characters behave very much like
partitions; in particular one can define hooks and cohooks, and the degrees of
the associated unipotent characters can be given in terms of a combinatorial
expression, called the hook formula (see e.g.~\cite{Ol}). Recall that for every
prime power $q$ and any integer $d>2$ with $(q,d)\ne(2,6)$ there exists a prime
dividing $q^d-1$ but no $q^f-1$ for $f<d$ called \emph{Zsigmondy (primitive)
prime of degree~$d$}. The following is easily checked from the hook formula:

\begin{lem}   \label{lem:def0}
 Let $\rho$ be a cuspidal unipotent character of a quasi-simple group $G$ of
 classical type. Then $\rho$ is of defect zero for every Zsigmondy prime of
 odd degree, as well as for those of even degree $d>d_G$.
\end{lem}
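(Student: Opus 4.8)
The plan is to rephrase ``$\rho$ has $\ell$-defect zero'' as an equality of cyclotomic-polynomial valuations of the degree of~$\rho$, and then to read that equality off from the rigid shape of the symbol labelling~$\rho$.

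I would first fix notation: write $|G|=q^N\prod_e\Ph{e}(q)^{A_e}$ for the order polynomial of~$G$ and $\rho(1)=2^{-\de}q^M\prod_e\Ph{e}(q)^{a_e}$ for the (generic) degree of~$\rho$, where $\de\in\{0,1\}$ since~$G$ is classical; recall that $a_e\le A_e$ for every~$e$. Let $\ell$ be a Zsigmondy prime of degree~$d$; then $\ell$ is odd, and $v_\ell(\Ph{e}(q))>0$ holds only for $e\in\{d,d\ell,d\ell^2,\dots\}$, with $v_\ell(\Ph{d\ell^j}(q))=1$ for $j\ge1$. Hence $\rho(1)_\ell=|G|_\ell$ if and only if $a_e=A_e$ for every~$e$ of the form $d\ell^j$ ($j\ge0$); since all such~$e$ share the parity of~$d$ and exceed~$d_G$ as soon as $d>d_G$, it suffices to prove that $a_e=A_e$ whenever $e$ is odd, and whenever $e$ is even with $e>d_G$.

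Next I would unwind the equality $a_e=A_e$: it says that $\Ph{e}(q)$ divides $\rho(1)$ to the full power with which it divides~$|G|$, equivalently (by the hook formula, \cite{Ol}) that $\rho$ is $e$-cuspidal, equivalently that the symbol~$\Lambda$ of~$\rho$ carries no removable $e$-hook when $e$ is odd, and no removable cohook of size~$e/2$ when~$e$ is even (for types $A$, $\tw2A$ one argues with the partition labelling~$\rho$ instead of~$\Lambda$). By Table~\ref{tab:cusp-class} the symbol~$\Lambda$ is the ``staircase'' $\binom{0\,1\,\cdots\,m}{-}$, with $m=2a$ for $B_n,C_n$ and $m=2a-1$ for $D_n,\tw2D_n$. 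Its first $\beta$-set $\{0,1,\dots,m\}$, and its empty second $\beta$-set, have no removable hook of any size at all, since no bead can be slid down to a free position; this immediately gives $a_e=A_e$ for all odd~$e$ and settles the odd case.

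For even~$e$ I would compute the removable cohooks of~$\Lambda$ directly. Moving the top bead~$m$ into the empty second $\beta$-set at position~$0$ removes a cohook of size~$m$, and no larger cohook exists; since a cohook of size~$c$ contributes the factor $\Ph{2c}(q)$, the cyclotomic polynomials arising this way are exactly $\Ph{2},\Ph{4},\dots,\Ph{2m}$, and here $2m=4a=d_G$ for $B_n,C_n$ while $2m=2(2a-1)=d_G$ for $D_n,\tw2D_n$. So~$\Lambda$ has no removable cohook of size~$e/2$ once $e>d_G$, which yields $a_e=A_e$ for even $e>d_G$; for $e=d_G$ such a cohook does exist, which is precisely why that degree must be excluded. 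The unitary case~$\tw2A_{n-1}$ goes through in the same way: the triangular partition $(a,a-1,\dots,1)$ has all hook lengths odd and at most $2a-1$, and tracking which $\Ph{e}$ these odd hooks feed into again produces the threshold $d_G=2(2a-1)$.

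The hard part is the combinatorial bookkeeping of the last two paragraphs: one must pin down the exact dictionary between $c$-hooks and $c$-cohooks of a symbol (and between hooks of partitions and cyclotomic polynomials in the unitary case, where there are sign subtleties) and the factors $\Ph{c}$, $\Ph{2c}$ produced by the hook formula, check that the exhibited cohook is of maximal size, and verify uniformly over $B_n,C_n,D_n,\tw2D_n$ (and $\tw2A_{n-1}$) that the resulting threshold is exactly the value~$d_G$ of Table~\ref{tab:cusp-class}. The factor $2^{-\de}$ in~$\rho(1)$ plays no role, because $\ell$ is odd.
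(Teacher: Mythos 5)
Your proof is correct and is exactly the ``checked from the hook formula'' verification that the paper invokes without further detail: reduce $\ell$-defect zero to the cyclotomic comparison $a_e=A_e$ along the arithmetic progression $d,d\ell,d\ell^2,\dots$, translate this via the hook formula into the non-existence of $e$-hooks (odd $e$) and $(e/2)$-cohooks (even $e$) of the labelling symbol, and observe that the staircase symbol $\binom{0\,\cdots\,m}{-}$ (resp.\ the triangular partition in type $\tw2A$) has no hooks at all and cohooks of size at most $m=d_G/2$ (resp.\ only odd hooks of size at most $2a-1$), giving precisely the stated threshold. The only point worth tightening is the implicit use of the equivalence between ``no $c$-(co)hook'' and ``no (co)hook of length divisible by $c$ with odd quotient'', but this is harmless here since the relevant symbol simply has no (co)hooks of the required sizes at all.
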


The blocks of cyclic defect and their Brauer trees for groups $G$ of classical
type have been determined by Fong and Srinivasan \cite{FS90}: assume that
$\ell\ne2$ is an odd prime and write $d=d_\ell(q)$ for the order of~$q$
modulo~$\ell$. First assume that $d$ is odd. Then a unipotent character
of $G$ lies in an $\ell$-block of cyclic defect if and only if the associated
symbol has at most one $d$-hook, and two unipotent characters lie in the
same $\ell$-block if their symbols have the same $d$-core. If $d=2d'$ is even,
the same statements hold with $d$ replaced by $d'$, ``hook'' replaced by
``cohook'' and ``core'' replaced by ``cocore''.  \par
Let us write $\Phi_d$ for the $d$th cyclotomic polynomial over~$\QQ$.

\begin{lem}   \label{lem:in cyclic block}
 Let $\rho$ be a cuspidal unipotent character of a quasi-simple group $G$ of
 classical type. Then there exist sequences of unipotent characters
 $\rho=\rho_1,\ldots,\rho_m=1_G$ of $G$ and of Zsigmondy primes $\ell_i\ne p$
 of either odd degree $d_i>2$ or even degree $d_i\ge d_G$ such that
 $\rho_i,\rho_{i+1}$ lie in the same $\ell_i$-block of cyclic defect of $G$,
 for $i=1,\ldots,m-1$, except when $G=D_4(2)$.
\end{lem}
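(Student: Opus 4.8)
\medskip

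The plan is to translate the statement into combinatorics of symbols (and, in type ${}^2A$, of partitions) and then to construct the chain explicitly, using the Fong--Srinivasan description \cite{FS90} recalled above together with Lemma~\ref{lem:def0}. Fix a prime $\ell\ne p$ and put $d=d_\ell(q)$. By \cite{FS90}, two unipotent characters of $G$ lie in a common $\ell$-block whenever their symbols have the same $d$-core (when $d$ is odd) or the same $d'$-cocore (when $d=2d'$ is even), and this block has cyclic defect exactly when the corresponding $\ell$-weight is~$1$, that is, when the common core (resp.\ cocore) has rank $n-d$ (resp.\ $n-d'$). So it suffices to produce, in each classical type, a sequence $\Lambda_1,\ldots,\Lambda_m$ of symbols of rank~$n$, with $\Lambda_1$ the cuspidal symbol of Table~\ref{tab:cusp-class} and $\Lambda_m$ the symbol of $1_G$, such that consecutive symbols $\Lambda_i,\Lambda_{i+1}$ share a $d_i$-core (resp.\ $d_i'$-cocore) of $\ell$-weight~$1$, each $d_i$ being odd with $d_i>2$ or even with $d_i\ge d_G$. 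By Lemma~\ref{lem:def0} and the definition of $d_G$, the character $\rho$ then has positive $\ell_i$-defect for any Zsigmondy prime $\ell_i$ of degree $d_i$, and one chooses $\ell_i$ to be such a prime; one exists unless $(q,d_i)=(2,6)$. In type $A_{n-1}$ one has $n=1$ and $\rho=1_G$, so $m=1$; in type ${}^2A$ the same argument is run with partitions of~$n$ in place of symbols, the relevant combinatorics for $GU_n$ being that of $e$-hooks of partitions with $e$ the appropriate function of $d$ and with $d_G=2(2a-1)$.

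For the remaining types $B$, $C$, $D$, ${}^2D$, the cuspidal symbol $\Lambda_1$ is the ``staircase'' symbol $\binom{0\,1\,\ldots\,k}{-}$ of the table, while $\Lambda_m$ is the symbol of~$1_G$. The first step is essentially forced: the top row of $\Lambda_1$ is a full interval $\{0,1,\ldots,k\}$, so $\Lambda_1$ has no hooks whatsoever, and among its cohooks only one has admissible degree -- the cohook of degree $d_G/2$ obtained by moving the largest bead $k$ of the top row into the (empty) bottom row -- and this one does yield a block of $\ell$-weight~$1$, with $d_1=d_G$. From here one would continue, at each stage passing to another member of the current cyclic block whose symbol lies ``closer'' to that of~$1_G$: a bounded number of cohook steps brings the defect down to that of~$1_G$, after which hook steps of the largest admissible odd degree rearrange the beads into the configuration of~$1_G$. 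Along the way one checks at each step that the relevant $d_i$-core (resp.\ $d_i'$-cocore) really has rank $n-d_i$ (resp.\ $n-d_i'$), so that the block is of cyclic defect, and that $d_i$ is admissible; since all the $\Lambda_i$ have rank~$n$ and $n$ grows quadratically in~$a$ (hence in~$d_G$), there is room for this in all but finitely many groups.

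The main obstacle will be exactly this: one must show that, starting from the forced first step, the chain can be pushed all the way to $1_G$ using only admissible degrees -- the cohook steps demanding degree $\ge d_G$ (which forces the symbol to stay ``wide enough''), the hook steps demanding odd degree $>2$, and a Zsigmondy prime being required to exist at every stage. Carrying this out uniformly over the types is the bulk of the work, and it is precisely here that the lone exception appears: for $G=D_4(2)$ one has $a=2$ and $d_G=6$, the forced first step requires a Zsigmondy prime of degree~$6$ for $q=2$, which does not exist because $(q,d)=(2,6)$, and there is no admissible substitute; a short direct check then confirms that no chain exists in this single case. (The superficially similar groups $\Sp_4(2)\cong\fS_6$ and $\mathrm{SU}_3(2)$ are not quasi-simple, so they do not arise.) A handful of further small-rank groups are best dealt with by inspection, e.g.\ using~\Chevie.
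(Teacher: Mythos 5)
The proposal has the right skeleton and several correct observations, but it does not actually establish the lemma; it outlines a plan and explicitly defers the core of the argument.

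What is correct: your reduction to the combinatorics of symbols (partitions in type ${}^2A$) via Fong--Srinivasan, the observation that a block of cyclic defect is exactly one of $\ell$-weight one, the analysis of the forced first step out of the cuspidal symbol $\binom{0\,\ldots\,k}{-}$ (it has no hooks, and the only weight-one cohook admissible by the degree constraint is the one of degree $d_G/2$, moving the largest bead to the empty row), and the identification of $D_4(2)$ as the unique exception coming from the Zsigmondy failure at $(q,d)=(2,6)$. This matches the paper.

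Where the gap is: the lemma asserts the existence of a complete chain $\rho=\rho_1,\ldots,\rho_m=1_G$, and the content of the proof is to construct it. You write that ``a bounded number of cohook steps brings the defect down to that of $1_G$, after which hook steps of the largest admissible odd degree rearrange the beads,'' and you then concede that ``carrying this out uniformly over the types is the bulk of the work.'' That concession is accurate: nothing in the proposal verifies that such a sequence of admissible moves exists, that each intermediate symbol has weight one for the chosen prime, or that a Zsigmondy prime of the right degree exists at every stage. The paper's proof does construct the chain explicitly: for ${}^2A_{n-1}$ it writes down $\delta_a\to(3a{-}3,a{-}3,\ldots,1)\to(5a{-}10,a{-}5,\ldots,1)\to\cdots\to(n)$; for $B_n,C_n$ it gives $\binom{0\,\ldots\,2a}{-}\to\binom{0\,\ldots\,2a-2}{0\ 4a-1}\to\binom{0\,\ldots\,2a-3\ 6a-4}{0\ 1}\to\cdots\to\binom{n}{-}$, with a separate patch (using $5$-hooks in place of $6$-hooks) for the small case $a=2$; and similarly for $D_n,{}^2D_n$. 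These chains are not of the clean ``all cohooks then all hooks'' shape you propose, and it is not clear that your two-phase scheme can be pushed through while respecting the degree constraints (``odd $>2$ or even $\ge d_G$'') at every step. You would need to replace the paragraph beginning ``The main obstacle will be exactly this'' with actual type-by-type sequences of symbols, together with the rank/weight checks, as the paper does.
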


\begin{proof}
The claim is clear for type $A_0$ as here the cuspidal character is the trivial
character. For $G$ of type $\tw2A_{n-1}$, $n\ge3$, there exists a cuspidal
unipotent character $\rho$ if and only if $n=a(a+1)/2$ for some $a\ge2$ (see
Table~\ref{tab:cusp-class}). This is labelled by the triangular partition
$\de_a=(a,\ldots,1)$ of $n$, which has a unique hook of length $2a-1$. By the
hook formula (see e.g.~\cite{Ol}) this implies that $|G|/\rho(1)$ is
divisible by $\Phi_d(q)$ exactly once, where $d=2(2a-1)$. So by \cite{FS90}
$\rho$ lies in an $\ell_1$-block of cyclic defect for any Zsigmondy prime
divisor $\ell_1$ of $q^{2a-1}+1$. (Such a prime $\ell_1$ exists unless
$(q,a)=(2,2)$, in which case $n=3$, but $\tw2A_2(2)$ is solvable.) Moreover,
the partition $(3a-3,a-3,\ldots,1)$ has the same $2a-1$-core as $\de_a$,
so the unipotent character $\rho_2$ labelled by it lies in the same
$\ell_1$-block as $\rho_1=\rho$. The latter partition has a
unique $4a-4$-hook, and arguing as before, we conclude that $\rho_2$ lies in
the same $\ell_2$-block of cyclic defect as the character $\rho_3$ labelled by
$(5a-10,a-5,\ldots,1)$, for $\ell_2$ a Zsigmondy prime divisor of
$q^{4a-4}+1$. Continuing inductively we arrive at $\rho_{2a+1}=1_G$ with
label the partition $(n)$.
\par
Groups of type $B_n$ and $C_n$, $n\ge2$, have a cuspidal unipotent character
$\rho$ if and only if $n=a(a+1)$ for some $a\ge1$. This is labelled by the
symbol $\binom{0\,\ldots\,2a}{-}$, which has a single $2a$-cohook.
So $\rho$ lies in a block of cyclic defect for any Zsigmondy prime divisor
$\ell_1$ of $q^{2a}+1$ (see again \cite{FS90}). Furthermore this block also
contains the unipotent character $\rho_2$ labelled by
$\binom{0\,\ldots\, 2a-2}{0\ 4a-1}$. This has a unique $4a-2$-hook, whose
removal gives $\binom{0\,\ldots\, 2a-2}{0\ 1}$, and adding a suitable
$4a-2$-hook we find $\binom{0\,\ldots\,2a-3\ 6a-4}{0\ 1}$, the symbol of a
unipotent character $\rho_3$ lying on the same Brauer tree as $\rho_2$ for
primes $\ell_3$ dividing $q^{4a-2}-1$. The Zsigmondy exception $a=2$
can be avoided by working with 5-hooks instead of 6-hooks in this step.
Continuing this way, after $m=a$ steps we arrive at the symbol $\binom{n}{-}$
labelling $\rho_{a+1}=1_G$.
\par
For groups of type $D_n$ or $\tw2D_n$, $n\ge4$, a cuspidal unipotent character
exists only if $n=a^2$ for some $a\ge2$. It is labelled by the symbol
$\binom{0\,\ldots\,2a-1}{-}$. This has a single $2a-1$-cohook and thus lies in
an $\ell_1$-block of cyclic defect for $\ell_1$ any primitive prime divisor of
$q^{2a-1}+1$. (Such a prime $\ell_1$ exists unless $(q,a)=(2,2)$, which leads
to the stated exception.) This block also contains the character $\rho_2$
labelled by $\binom{0\,\ldots\,2a-3}{0\ 4a-3}$. The latter symbol has a
unique $4a-4$-hook; removing this and adding a different one leads to the
symbol $\binom{0\,\ldots\,2a-4\ 6a-7}{0\ 1}$ of a unipotent character
$\rho_3$. Again, a straightforward induction completes the proof.
\end{proof}

\subsection{Constituents of Lusztig induction}
Recall from \cite[11.1]{DM91} that for any $F$-stable Levi subgroup $\bL$ of
a parabolic subgroup of $\bG$, Lusztig defines a linear map
$$\RLG:\ZZ\Irr(\bL^F)\longrightarrow\ZZ\Irr(\bG^F),$$
called \emph{Lusztig induction}. (In fact, this map might depend on the choice
of parabolic subgroup containing $\bL$, but it does not in the case of unipotent
characters, see e.g.~\cite[Thm.~1.33]{BMM}.) Also recall from \cite{BMM} that
an $F$-stable torus $\bT\le\bG$ is called a \emph{$d$-torus} (for some $d\ge1$)
if $\bT$ is split over $\FF_{q^d}$ but no non-trivial $F$-stable subtorus of
$\bT$ splits over any smaller field. The centralisers in $\bG$ of $d$-tori are
the \emph{$d$-split Levi subgroups}. They are $F$-stable Levi subgroups
of suitable parabolic subgroups of $\bG$.

\begin{lem}   \label{lem:mult 1}
 Let $G$ be quasi-simple of classical type $B_n,C_n,D_n$ or $\tw2D_n$ and
 $\rho$ be a cuspidal unipotent character of $G$. Let $\bL\le \bG$ be a
 2-split Levi subgroup of $\bG$
 \begin{itemize}
  \item of twisted type $\tw2A_{n-1}(q).\Ph2$ in types $B_n$ and $C_n$; or
  \item of twisted type $\tw2A_{n-2}(q).\Ph2^2$ in types $D_n$ and $\tw2D_n$.
 \end{itemize}
 There exist sequences of unipotent characters $\rho=\rho_1,\ldots,\rho_m=1_G$
 of $G$ and $\psi_1,\ldots,\psi_{m-1}$ of $L$ such that $\rho_i,\rho_{i+1}$
 both occur with multiplicity~$\pm1$ in $\RLG(\psi_i)$ for $i=1,\ldots,m-1$.
\end{lem}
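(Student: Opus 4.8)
The plan is to mimic the inductive structure of Lemma~\ref{lem:in cyclic block}, but to replace the "same cyclic block" linkage by a "common constituent of $\RLG$" linkage, exploiting the fact that the $2$-split Levi subgroup $\bL$ specified in the statement is the minimal $d$-split Levi (with $d=2$, using cohooks) containing the relevant cuspidal unipotent character. First I would recall the combinatorics: for type $B_n$ or $C_n$ the cuspidal unipotent symbol $\binom{0\,\ldots\,2a}{-}$ has a single $2$-cohook, so removing it $a(a+1)$ times lands in a $2$-split Levi of type $\tw2A_{n-1}(q).\Ph2$ carrying a (unique, up to the sign issues) cuspidal unipotent character; the analogous statement for $D_n, \tw2D_n$ removing pairs of $\Ph2$'s gives $\tw2A_{n-2}(q).\Ph2^2$. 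The key input is the explicit knowledge of $\RLG$ on unipotent characters: by Lusztig's theory (as recalled via \cite{BMM}) the decomposition of $\RLG(\psi)$ for $\psi$ unipotent is governed by the relative Weyl group $W_\bG(\bL)$ and, for the $d$-split Levis of classical groups with $d=2$, can be read off from the combinatorics of adding/removing $2$-hooks and $2$-cohooks from symbols. Concretely, one shows that two unipotent characters $\rho_i,\rho_{i+1}$ of $G$ whose symbols have the same $2$-cocore both appear with multiplicity $\pm1$ in $\RLG(\psi_i)$ for an appropriate unipotent $\psi_i$ of $L$ — this is the Howlett--Lehrer / Lusztig-induction analogue of "lying in the same Brauer tree," and indeed is exactly the statement underlying why those Brauer trees exist.

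The main body of the argument then runs exactly as in Lemma~\ref{lem:in cyclic block}, step by step: I would take the same explicit chain of symbols $\rho=\rho_1,\rho_2,\ldots,\rho_m=1_G$ constructed there (for types $B_n/C_n$ the chain $\binom{0\,\ldots\,2a}{-}\leadsto\binom{0\,\ldots\,2a-2}{0\;4a-1}\leadsto\cdots\leadsto\binom{n}{-}$, and the corresponding $D_n/\tw2D_n$ chain), observe that each consecutive pair shares a $2$-cocore, hence lies in a common $2$-split Levi $\bL_i$, and then verify that $\rho_i$ and $\rho_{i+1}$ occur with multiplicity $\pm1$ in $\RLG[\bL_i\to\bG]$ applied to a suitable unipotent $\psi$ of $L_i$. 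Finally, since $\bL$ is contained in every such $\bL_i$ (being the minimal $2$-split Levi with a cuspidal unipotent character, namely the common $2$-cocore Levi), transitivity of Lusztig induction $\RLG = R_{\bL_i}^\bG \circ R_\bL^{\bL_i}$ lets me pull everything back to $\bL$: I set $\psi_i := R_\bL^{\bL_i}(\text{appropriate unipotent of }L_i)$, or more simply note that the cuspidal unipotent of $L$ Harish-Chandra-induces with multiplicity $\pm1$ to each needed unipotent of $L_i$, so the composite has $\rho_i,\rho_{i+1}$ as constituents with multiplicity $\pm1$. One has to be slightly careful that the signs and multiplicities in the two Lusztig-induction steps multiply to $\pm1$ and do not accidentally cancel to $0$; this is where the precise combinatorial formula for $\RLG$ on symbols (only one $d$-cohook or $d$-hook involved at each stage, as engineered in the chain) is used.

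The step I expect to be the main obstacle is establishing the multiplicity-$\pm1$ statement for $\RLG$ on the relevant unipotent characters — i.e.\ making precise the claim that sharing a $2$-cocore forces a common constituent with coefficient $\pm1$. In characteristic $0$ this is ultimately a statement about the decomposition of the permutation-like modules for the relative Weyl group $W_\bG(\bL)$, which for these Levis is a wreath-product-type group, and one must track that the specific symbols appearing in the chain correspond to characters of $W_\bG(\bL)$ that pair nontrivially with the relevant induced character; the Zsigmondy-type exceptions (and the excluded case $D_4(2)$, as in Lemma~\ref{lem:in cyclic block}) should not intervene here because $d=2$ always admits primitive primes, but one should double-check small-rank cases (e.g.\ $a=1$ in $B_2/C_2$) directly. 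A clean way to sidestep a long computation is to invoke that the Brauer trees of \cite{FS90} for $\Ph2$-blocks are straight lines whose edges are precisely governed by $d'$-cohook combinatorics, and that adjacency on such a tree implies (by the known structure of $\RLG$ into the corresponding $2$-split Levi) a common $\RLG$-constituent with multiplicity $\pm1$ — thereby reusing Lemma~\ref{lem:in cyclic block} almost verbatim and only adding the transitivity-of-$\RLG$ reduction to the fixed minimal Levi $\bL$.
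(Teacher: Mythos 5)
Your high-level strategy --- reduce $\RLG$ from the $2$-split Levi to induction in relative Weyl groups via \cite[Thm.~3.2]{BMM} and build a chain of unipotent characters --- is the right one and matches the paper. But there is a concrete gap in the core of your argument: you propose to take ``the same explicit chain of symbols constructed'' in Lemma~\ref{lem:in cyclic block} and assert that each consecutive pair shares a $2$-cocore. That chain was not built that way. Its first step links $\binom{0\,\ldots\,2a}{-}$ and $\binom{0\,\ldots\,2a-2}{0\;4a-1}$ via a $2a$-cohook (a Zsigmondy prime of degree $4a$), the next step uses a $(4a-2)$-hook (degree $4a-2$), and so on --- the degrees $d_i$ are large and vary. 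Consecutive terms in that chain are therefore not on a common $\Phi_2$-Brauer tree and need not even all lie in the principal $2$-Harish-Chandra series, so the ``$\Phi_2$-adjacency implies common $\RLG$-constituent'' shortcut you invoke has no purchase on it. You would first have to produce a genuinely new chain built for $d=2$, which is precisely the content you were trying to borrow.

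What the paper actually does is construct that new chain directly in the relative Weyl group: for $B_n/C_n$ it records the cuspidal as the bipartition $(\de_a;\de_a)\in\Irr(W(B_n))$, takes $\psi_1$ corresponding to the partition $2\de_a=(2a,2a-2,\ldots,2)$ of $\fS_n=W_L$, and reads off multiplicities from the explicit branching rule for $\Ind_{\fS_n}^{W(B_n)}$ --- the constituents are the bipartitions whose parts reassemble $2\de_a$, so both $(\de_a;\de_a)$ and $(2a,2a-4,\ldots;2a-2,2a-6,\ldots)$ occur once; the chain then continues with $(4a-2,4a-10,\ldots)$ etc.\ (and for $D_n$ one induces through $\fS_{n-1}\le\fS_n\le W(D_n)$). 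This is a self-contained combinatorial computation; it also works with the single fixed Levi $\bL$ throughout, so the transitivity-of-$\RLG$ reduction and the attendant sign/cancellation worries you flag never arise. Finally, your appeal to the degree-$2$ Zsigmondy prime needs care: Zsigmondy primes are only guaranteed for $d>2$, and the relevant $\Phi_2$-primes are handled in \cite{FS90} by a separate analysis, so ``$d=2$ always admits primitive primes'' should not be stated as a blanket fact.
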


Here, as in later results and tables, a notation like $\tw2A_{n-1}(q).\Ph2$ is
meant to indicate not the precise group theoretic structure of the finite
group, but rather the root system of the underlying algebraic group (not its
isogeny type) together with the action of the Frobenius: in our example the
underlying group has a root system of type $A_{n-1}$ on which $F$ acts by the
non-trivial graph automorphism, and its center is a 1-dimensional 2-torus.

\begin{proof}
First consider types $B_n$ and $C_n$. So by Table~\ref{tab:cusp-class},
$n=a(a+1)$ for some $a\ge1$, and $\rho$ is parametrised by the symbol
$\cS=\binom{0\,\ldots\,2a}{-}$. Observe that all unipotent characters of groups
of type $A$ are uniform, so that $\RLG(\rho)$ for any unipotent character
$\rho$ of $L$ can be expressed in terms of Deligne--Lusztig characters
$\RTG(1_T)$ for various $F$-stable maximal tori $\bT$ of~$\bG$. The
decomposition of Deligne--Lusztig characters has been determined explicitly
by Lusztig, and from this our claim can be checked by direct computation.
\par
To do this, we appeal to \cite[Thm.~3.2]{BMM}, which shows that Lusztig
induction of unipotent characters from 2-split Levi subgroups is, up to signs,
the same as induction in the corresponding relative Weyl groups. Since the
symbol $\cS$ has exactly $n$ 1-cohooks, its 1-cocore is trivial and so $\rho$
lies in the principal $2$-series. The relative Weyl group for the principal
2-Harish-Chandra series of $G$ is $W{=}W(B_n)$, the one for $L$ is its maximal
parabolic subgroup $W_L=\fS_n$. Determination of the 2-quotient of the symbol
$\cS$ shows that it corresponds to the character with label $(\de_a;\de_a)$
of $W$, with $\de_a=(a,\ldots,1)$ the triangular partition.
Let $\psi$ be the unipotent character of $L$ parametrised by the partition
$2\de_a=(2a,2a-2,\ldots,2)$. The constituents of $\Ind_{W_L}^W(2\de_a)$ are
exactly those bipartitions whose parts (including zeroes) can be added up so
as to obtain the partition $2\de_a$, with multiplicity one if this is possible
in a unique way. Thus it contains $(\de_a;\de_a)$ exactly once, but also
$(2a,2a-4,\ldots;2a-2,2a-6,\ldots)$. This in turn is contained once in
$\Ind_{W_L}^W((4a-2,4a-10,\ldots))$, and so on. Continuing in this way we
reach the symbol $(a(a+1);-)$ which parametrises the trivial character of~$G$.
\par
For $G$ of type $D_n$, by Table~\ref{tab:cusp-class} a cuspidal unipotent
character exists if $n=a^2$ for some even $a\ge2$, and it is labelled by the
symbol $\binom{0\,\ldots\,2a-1}{-}$. Again, this character lies in the principal
2-series of $G$, and by \cite[Thm.~3.2]{BMM} the decomposition of $\RLG$ can be
computed in the relative Weyl groups $W_L=\fS_{n-1}\le W=W(D_n)$. Here,
$\rho$ corresponds to the character of $W$ with label $(\de_a;\de_{a-1})$.
Let $W_1=\fS_n\le W(D_n)$ be a maximal parabolic subgroup of $W(D_n)$
containing $W_L$. For $\psi\in\Irr(W_L)$ labelled by
$\la=(\de_a+\de_{a-1})\setminus\{1\}$,
$\Ind_{W_L}^{W_1}(\la)$ contains all characters whose label is obtained by
adding one box to the Young diagram of $\la$. Then
$\Ind_{W_L}^W(\la)=\Ind_{W_1}^W\Ind_{W_L}^{W_1}(\la)$ can be
computed as before. It ensues that the multiplicities of the characters
labelled by $(\de_a;\de_{a-1})$ and by $(2a,2a-5,\ldots;2a-3,2a-7,\ldots)$ in
$\Ind_{W_L}^W(\la)$ are both~1. Again, an easy induction gives the claim.
The same type of reasoning applies for $\tw2D_n$ with $n$ an odd square.
\end{proof}

\subsection{Brauer trees and automorphisms}
The following result of Feit on Brauer trees will also provide some
information on extendibility. Let $\ell$ be a prime.

\begin{lem}   \label{lem:cycdef}
 Let $B$ be an $\ell$-block of a finite group $H$ with cyclic defect. Let
 $\gamma$ be an automorphism of $H$ fixing some non-exceptional character
 in $B$. Then $\gamma$ fixes every non-exceptional character in $B$.
\end{lem}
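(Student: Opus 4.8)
The plan is to exploit the rigidity of Brauer trees under automorphisms. Recall that for an $\ell$-block $B$ of cyclic defect, the Brauer tree is a tree whose vertices are the irreducible ordinary characters in $B$ (with the exceptional characters collapsed to a single ``exceptional vertex'' if there is more than one), whose edges correspond to the irreducible Brauer characters, and which is equipped with a planar embedding (a cyclic ordering of edges around each vertex) and possibly an assignment of multiplicity to the exceptional vertex. An automorphism $\gamma$ of $H$ stabilising the block $B$ permutes the ordinary and modular irreducibles in $B$ and hence induces a graph automorphism of the Brauer tree that preserves the planar structure; in particular it must fix the exceptional vertex (since that vertex is canonically distinguished, being the only one that may carry multiplicity $>1$, or else all vertices are non-exceptional and there is nothing to distinguish). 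So the first step is to reduce to the case where $\gamma$ stabilises $B$: if $\gamma$ fixes a non-exceptional character $\chi\in B$ then $\gamma$ fixes $B$, since $B$ is the unique block containing $\chi$.

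Next I would invoke Feit's theorem on Brauer trees, which asserts that a Brauer tree of cyclic defect group admits no nontrivial automorphism fixing a non-exceptional vertex other than those which could only arise from a very restricted list; more precisely, the relevant statement (due to Feit, and also following from the classification via the structure theory of blocks with cyclic defect) is that an automorphism of the Brauer tree preserving the cyclic ordering of edges around each vertex and fixing at least one edge or non-exceptional vertex must be the identity. The key point is that a planar-embedded tree with a distinguished vertex (the exceptional one) and one further fixed non-exceptional vertex is rigid: walking outward from the fixed non-exceptional vertex, the cyclic ordering of edges forces the induced permutation to fix each neighbouring edge, hence each neighbouring vertex, and one proceeds by induction on the distance in the tree. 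Since the exceptional vertex is already fixed, the whole automorphism of the tree is trivial, so $\gamma$ fixes every vertex, i.e. every non-exceptional character in $B$.

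The main obstacle is making precise why the graph automorphism of the Brauer tree induced by $\gamma$ preserves the planar embedding, i.e. the cyclic order of edges around each vertex. This is exactly the content of Feit's result and rests on the detailed module-theoretic description of blocks of cyclic defect (the Brauer tree together with its planar structure is an invariant of the block as an algebra, equivalently of its derived category), so that any algebra automorphism — in particular one induced by a group automorphism — respects it. Granting that, together with the observation that the exceptional vertex is canonically fixed, the rigidity argument above is purely combinatorial and elementary. One should also note the degenerate case where the tree is a single edge (two non-exceptional characters and the defect group is of order~$\ell$ with the block nilpotent-like): here fixing one of the two characters trivially forces fixing the other, so the statement holds. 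Thus the proof reduces to: (i) $\gamma$ stabilises $B$; (ii) $\gamma$ induces a planarity-preserving automorphism of the Brauer tree fixing the exceptional vertex (Feit); (iii) a tree automorphism preserving a planar embedding and fixing two vertices is the identity.
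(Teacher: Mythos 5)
Your proposal follows the same route as the paper, which simply cites Feit's Theorem~2.4 from \cite{Ft}: that theorem already states that any automorphism of $H$ fixing one non-exceptional character in a block of cyclic defect fixes \emph{every} node of the Brauer tree, so the paper's proof is a one-liner. Your reconstruction of the underlying rigidity argument is in the right spirit, and is correct when the exceptional multiplicity is $>1$: the exceptional vertex is then an algebraic invariant of the block, hence $\gamma$-fixed, and walking outward along the (necessarily fixed) path from the fixed non-exceptional vertex to the exceptional one, using that a cyclic permutation of the edges at a vertex with a fixed point must be the identity, does force $\gamma$ to fix all vertices.

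There is, however, a gap in your handling of the case of exceptional multiplicity~$1$. You dismiss it by saying ``there is nothing to distinguish'' and only address the single-edge tree, but with a larger tree and no canonically distinguished exceptional vertex your two-anchor argument does not immediately apply: a planarity-preserving automorphism of a tree can fix a vertex yet be nontrivial (e.g.\ a rotation about the centre of a star). To close this case you would either need to appeal to the finer module-theoretic structure Feit actually exploits (the Green walk yields a single global cyclic structure on the edges, not merely one cyclic ordering per vertex, and this extra rigidity forces the conclusion), or observe that the combinatorial centre of the tree (a vertex or an edge, preserved by every graph automorphism) can serve as the second anchor in place of the exceptional vertex, after which your induction goes through unchanged. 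Since Feit's theorem already covers all cases, the paper avoids this issue entirely by citing it directly; your proposal would be complete with one of these two repairs.
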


\begin{proof}
Assume that $\chi\in\Irr(B)$ is non-exceptional and fixed by $\gamma$.
Then by \cite[Thm.~2.4]{Ft} all nodes in the Brauer tree of $B$ are fixed by
$\gamma$, hence in particular all non-exceptional characters in $B$.
\end{proof}

\begin{cor}   \label{cor:extend}
 Let $N\unlhd H$ be finite groups with $H/N$ solvable and of order prime
 to~$\ell$. Let $\chi,\chi'$ be non-exceptional characters on the same
 $\ell$-Brauer tree for~$N$. Then $\chi$ extends to $H$ if and only if
 $\chi'$ does.
\end{cor}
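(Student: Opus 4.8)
The plan is to reduce the extension question to a Clifford-theoretic statement about the stabilisers of $\chi$ and $\chi'$ in $H$, and then to feed in Lemma~\ref{lem:cycdef} to show these stabilisers "see" the same information. First I would observe that since $\chi,\chi'$ lie on the same $\ell$-Brauer tree for $N$, they are in particular $N$-characters of the same block $B$ of cyclic defect; by Lemma~\ref{lem:cycdef}, the group $H$ acts on $\Irr(B)$ permuting the nodes of the tree, and the non-exceptional characters are exactly the non-exceptional nodes, which form a single $H$-stable set on which $H$ acts. The key point is that, by \cite[Thm.~2.4]{Ft} (as invoked in Lemma~\ref{lem:cycdef}), an automorphism fixing one non-exceptional node fixes all of them; hence for $\gamma\in H$ one has $\gamma$ fixing $\chi$ iff $\gamma$ fixes every non-exceptional character, in particular iff $\gamma$ fixes $\chi'$. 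Therefore the stabilisers satisfy $H_\chi = H_{\chi'}$ — call this common group $H_0$, with $N\le H_0\le H$ and $H_0/N$ a subgroup of the solvable $\ell'$-group $H/N$.

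Next I would invoke the standard criterion for extension: a character $\chi\in\Irr(N)$ that is $H$-invariant extends to $H$ if and only if it extends to $H_0$ — but here $H_0 = H_\chi$ need not be all of $H$, so the precise statement to use is that $\chi$ extends to $H$ iff $\chi$ is $H$-invariant \emph{and} it extends to its stabiliser; since $\chi$ need not be $H$-invariant at all, what I really want is: for the purposes of comparing $\chi$ and $\chi'$, extension to $H$ is governed by extension to $H_0 = H_\chi = H_{\chi'}$ together with the (common) induced action. Concretely, $\chi$ extends to $H$ if and only if $H_\chi = H$ and $\chi$ extends to $H_\chi = H$; so if $H_\chi\ne H$ then neither $\chi$ nor $\chi'$ extends to $H$ and there is nothing to prove, while if $H_\chi = H_{\chi'} = H$ then both are $H$-invariant and I must compare the two extension problems over the same group $H$ with $H/N$ solvable of order prime to $\ell$. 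In that case I would use the fact — valid because $|H/N|$ is coprime to $\ell$ — that an $\ell'$-group extension obstruction for an $H$-invariant $\chi\in\Irr(N)$ is detected inside the block: one may reduce modulo $\ell$ and the obstruction class in $H^2(H/N,\overline{\FF_\ell}^\times)$ is trivial, or, more robustly, apply \cite[Thm.~2.4]{Ft} a second time to the block $\widetilde B$ of $H$ lying over $B$ (a block of cyclic defect as well, since $|H/N|$ is prime to $\ell$): $\chi$ extends iff the corresponding node of the Brauer tree of $\widetilde B$ splits into $[H:N]$ characters of $H$ lying over it, and this "splitting behaviour" is constant along the non-exceptional nodes of the tree by Feit's theorem.

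The cleanest route, which I would ultimately prefer, is therefore: pass to the block $\widetilde B$ of $H$ covering $B$; since $H/N$ has order prime to $\ell$, $\widetilde B$ also has cyclic defect and its Brauer tree "covers" that of $B$ with the non-exceptional nodes mapping to non-exceptional nodes. Whether a non-exceptional $\chi\in\Irr(B)$ extends to $H$ is equivalent to a purely tree-theoretic condition on the fibre of $\widetilde B\to B$ over that node (the node is covered by $[H:N]$ linear-character twists of a single extension precisely when $\chi$ is $H$-invariant and extends). By Feit's theorem \cite[Thm.~2.4]{Ft} this fibre structure is the same at every non-exceptional node — the relevant automorphism-theoretic data is constant along the tree — so $\chi$ extends iff $\chi'$ does. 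The main obstacle, and the step requiring the most care, is precisely making the phrase "the fibre structure is constant along non-exceptional nodes" rigorous: one must check that $H$-conjugation, together with Clifford theory over $N$, acts on the Brauer tree of $B$ compatibly with Feit's description, so that "being $H$-invariant and extendible" is encoded by a single $H$-stable datum on the non-exceptional part of the tree. Once this compatibility is in place the corollary follows immediately.
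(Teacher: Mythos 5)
The first step of your argument — that $H_\chi = H_{\chi'}$, so if either $\chi$ or $\chi'$ fails to be $H$-invariant, then neither extends to $H$ — is correct and is essentially Lemma~\ref{lem:cycdef} applied to conjugation. But the heart of the corollary is the case $H_\chi = H_{\chi'} = H$, and there your ``cleanest route'' has a genuine gap that you yourself flag. Feit's theorem~\cite[Thm.~2.4]{Ft}, as invoked in Lemma~\ref{lem:cycdef}, is a statement about a single automorphism fixing one non-exceptional node and therefore fixing all nodes; it does not by itself say anything about the fibre of a covering of Brauer trees $\widetilde B \to B$, nor that this fibre is ``the same'' over every non-exceptional node in a way that detects extendibility. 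Whether $\chi$ extends to $H$ is governed by a cohomology class in $H^2(H/N,\mathbb{C}^\times)$ (the Clifford obstruction), and it is not clear this is visible as a ``purely tree-theoretic condition on the fibre''; even stating what the fibre of $\widetilde B \to B$ over a node is requires choices (the block of $H$ covering $B$ need not be unique), and you give no argument that the relevant data is constant along the tree. You would need a new lemma here, not another appeal to~\cite[Thm.~2.4]{Ft}.

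The paper's actual proof avoids all of this by using the solvability hypothesis directly and constructively: take a chain $N = N_1 \unlhd \cdots \unlhd N_r = H$ with each quotient $N_{i+1}/N_i$ cyclic of prime order, and assume $\chi$ extends to $\tilde\chi\in\Irr(H)$, with compatible restrictions $\chi_i = \tilde\chi|_{N_i}$. One then builds extensions $\chi_i'$ of $\chi'$ up the chain by induction: if $\chi_i'$ lies on the same Brauer tree as $\chi_i$ (in $N_i$), then $N_{i+1}$-invariance of $\chi_i$ forces $N_{i+1}$-invariance of $\chi_i'$ by Lemma~\ref{lem:cycdef}, hence $\chi_i'$ extends to $N_{i+1}$ because the quotient is cyclic, and among its extensions one can choose $\chi_{i+1}'$ in the same $\ell$-block as $\chi_{i+1}$, keeping the induction hypothesis alive. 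This step-by-step argument only ever needs the easy extendibility criterion for cyclic quotients and the already-proved Lemma~\ref{lem:cycdef}, and it never needs to analyse the Brauer tree of the overgroup. You should rework your proof along these lines, or else prove the missing lemma about covering Brauer trees before relying on it.
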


\begin{proof}
As $H/N$ is solvable, there is a sequence of subgroups
$N=N_1\unlhd\cdots\unlhd N_r=H$ with $N_i/N_{i-1}$ cyclic of prime order.
Assume that $\chi$ extends to $\tilde\chi\in\Irr(H)$, and let
$\chi_i=\tilde\chi|_{N_i}$, $1\le i\le r$, a system of compatible extensions of
$\chi$ to $N_i$. \par
Assume that we have extended $\chi'$ to a character $\chi_i'$ of $N_i$ on the
same $\ell$-Brauer tree as $\chi_i$. Since $\chi_i$ extends to $H$, it is
invariant in $N_{i+1}$, hence by Lemma~\ref{lem:cycdef} the same is true for
$\chi_i'$. So $\chi_i'$ also extends to $N_{i+1}$ (as $N_{i+1}/N_i$ is cyclic),
and clearly we may choose an extension $\chi_{i+1}'$ in the
same $\ell$-block as $\chi_{i+1}$. So the claim follows by induction.
\end{proof}

\begin{rem}   \label{rem:lem2.4}
We can now lay out two types of arguments we will use to relate the stabilisers
in $\Aut(G)$ of two characters $\rho,\chi\in\Irr(G)$: \par
(a) Assume there is a sequence $\rho=\rho_1,\ldots,\rho_m=\chi$ of irreducible
characters of $G$, a sequence of primes $\ell_1,\ldots,\ell_{m-1}$ and a
sequence of $\ell_i$-blocks $B_1,\ldots,B_{m-1}$ of $G$ with cyclic defect such
that $\rho_i,\rho_{i+1}\in\Irr(B_i)$ are non-exceptional for all $i$. Then by
Lemma~\ref{lem:cycdef} any automorphism fixing $\rho=\rho_1$ also fixes
$\rho_m=\chi$ and vice versa.   \par
(b) Similarly, assume there is a Levi subgroup $L$ of $G$ and sequences
$\rho=\rho_1,\ldots,\rho_m=\chi$ of irreducible characters of $G$,
$\psi_1,\ldots,\psi_{m-1}$ of $L$ satisfying the conclusion of
Lemma~\ref{lem:mult 1}. Assume that $\gamma$ is an automorphism of $G$
stabilising $L$ and fixing all $\rho_i$ and all $\psi_i$, and that there
are $\gamma$-invariant normal subgroups $L'\le L$, $G'\le G$ such that all
$\rho_i,\psi_i$ have exactly two constituents upon restriction. Then if
$\gamma$ does not fix the constituents of $\rho_1$, it cannot fix those of
$\rho_m$ and vice versa.
\end{rem}

In our arguments we will have to deal not only with unipotent characters.
Recall that Lusztig gives a partition $\Irr(G)=\coprod_s\cE(G,s)$ of the set
of irreducible characters of $G$ into \emph{rational Lusztig series $\cE(G,s)$}
indexed by semisimple elements $s\in G^*$ up to conjugacy (see
\cite[Thm.~11.8]{B06}). Moreover, for any such $s$ the Lusztig series
$\cE(G,s)$ is in bijection with the unipotent characters of $C_{G^*}(s)$,
where, as customary, a character of $C_{G^*}(s)$ is called unipotent if its
restriction to $C_{\bG^*}^\circ(s)^F$ has unipotent constituents (see
\cite[Prop.~5.1]{Lu88}). We will be particularly interested in semisimple
characters. Recall our regular embedding $\bG\hookrightarrow\tbG$ with dual
epimorphism $\tbG^*\rightarrow\bG^*$ and let $\tilde s\in\tbG^{*F}$ be a
preimage of $s$. The \emph{semisimple character} $\tchi_s$ in
$\cE(\tG,\tilde s)$ can be defined as an explicit linear combination of
Deligne--Lusztig characters (see \cite[(15.6)]{B06}); the semisimple
characters in $\cE(G,s)$ are then just the constituents of the restriction of
$\tchi_s$ to $G$, see \cite[(15.8)]{B06}. 

We will need the following properties of Jordan decomposition:

\begin{lem}   \label{lem:Jordan}
 In the above situation we have:
 \begin{enumerate}
  \item[\rm(a)] Jordan decomposition sends semisimple characters to semisimple
   characters.
  \item[\rm(b)] $\cE(G,s)$ contains a cuspidal character if and only if
   $C_{G^*}(s)$ has a cuspidal unipotent character and moreover
   $Z^\circ(C_{\bG^*}(s))$ and $Z^\circ(\bG)$ have the same $\FF_q$-rank.
   In this case, Jordan decomposition induces a bijection between cuspidal
   characters.
 \end{enumerate}
\end{lem}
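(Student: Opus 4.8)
The plan is to deduce both statements from the known compatibility properties of Lusztig's Jordan decomposition $J_s\colon\cE(G,s)\to\{\text{unipotent characters of }C_{G^*}(s)\}$. For~(a) I would first pass to the regular embedding $\bG\hookrightarrow\tbG$, where the fixed preimage $\ts\in\tbG^{*F}$ of~$s$ has \emph{connected} centraliser $C_{\tbG^*}(\ts)$. The characters in $\cE(\tG,\ts)$ have degrees $[\tG^*:C_{\tG^*}(\ts)]_{p'}\,\psi(1)$ with $\psi$ running over the unipotent characters of $C_{\tG^*}(\ts)$; since a connected reductive group has the trivial character as its only unipotent character of degree~$1$, the semisimple character $\tchi_s$, being the character of minimal degree $[\tG^*:C_{\tG^*}(\ts)]_{p'}$ in its series, is precisely the one corresponding under $J_{\ts}$ to $1_{C_{\tG^*}(\ts)}$, i.e.\ to the semisimple unipotent character. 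To descend to $G$ I would invoke the compatibility of Jordan decomposition with the regular embedding, \cite[\S15]{B06}: the semisimple characters of $\cE(G,s)$ are by definition the constituents of the restriction of $\tchi_s$ to~$G$, and under $J_s$ these correspond to the unipotent characters of $C_{G^*}(s)$ lying above the trivial character of $C_{\bG^*}^\circ(s)^F$, that is, to the semisimple characters of $C_{G^*}(s)$.

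For~(b) I would combine two facts. The structural one: the $F$-stable Levi subgroups of $F$-stable parabolic subgroups of~$\bG$ are exactly the $1$-split Levi subgroups, i.e.\ the centralisers $C_\bG(\mathbf{S})$ of $F$-split tori $\mathbf{S}$; dually, a proper $1$-split Levi $\bM$ of~$\bG^*$, say $\bM=C_{\bG^*}(\mathbf{S})$ with $\mathbf{S}\ne1$ an $F$-split torus, contains $C_{\bG^*}(s)$ if and only if $Z^\circ(C_{\bG^*}(s))$ contains an $F$-split torus not lying in $Z^\circ(\bG^*)$, and an elementary argument with maximal split subtori turns this into the condition that the $\FF_q$-rank of $Z^\circ(C_{\bG^*}(s))$ strictly exceeds that of $Z^\circ(\bG^*)$ --- which by duality equals the $\FF_q$-rank of $Z^\circ(\bG)$. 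The analytic one: for a proper $1$-split Levi $\bL$ of~$\bG$ with $s\in L^*$, Lusztig induction $\RLG$ sends $\cE(L,s)$ into $\ZZ\cE(G,s)$ and, under $J_s$ on both sides, corresponds up to sign to $R_{C_{\bL^*}(s)}^{C_{\bG^*}(s)}$ on unipotent characters, where $C_{\bL^*}(s)=C_{\bG^*}(s)\cap\bL^*$. Granting these, the necessity of the rank condition follows: if the $\FF_q$-rank of $Z^\circ(C_{\bG^*}(s))$ exceeds that of $Z^\circ(\bG)$, choose a proper $1$-split Levi $\bL$ of~$\bG$ with $C_{\bG^*}(s)\le\bL^*$; then $C_{\bL^*}(s)=C_{\bG^*}(s)$, so $\RLG$ corresponds under $J_s$ to Lusztig induction from $C_{\bG^*}(s)$ to itself, namely the identity, and every member of $\cE(G,s)$ occurs, up to sign, in $\RLG$ of a character coming from the proper Levi~$\bL$ and is therefore non-cuspidal. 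So $\cE(G,s)$ has no cuspidal character, although $C_{G^*}(s)$ may still possess a cuspidal unipotent one --- which is why both displayed conditions are required.

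Now assume the rank condition holds. If $\chi\in\cE(G,s)$ occurs in $\RLG(\psi)$ for a proper $1$-split Levi $\bL$ of~$\bG$ and some $\psi\in\Irr(L)$, write $\psi\in\cE(L,t)$; then $\RLG(\cE(L,t))\subseteq\ZZ\cE(G,t)$ forces $\cE(G,t)=\cE(G,s)$, so $t$ is $G^*$-conjugate to~$s$, and after replacing $\bL$ by a conjugate we may assume $s\in L^*$. The rank condition makes $C_{\bL^*}(s)=C_{\bG^*}(s)\cap\bL^*$ a \emph{proper} $1$-split Levi subgroup of $C_{\bG^*}(s)$ (otherwise the $F$-split torus defining $\bL^*$ would lie in $Z^\circ(C_{\bG^*}(s))$, contradicting the rank equality), so $J_s(\chi)$ occurs in $R_{C_{\bL^*}(s)}^{C_{\bG^*}(s)}$ of a unipotent character. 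Conversely every proper $1$-split Levi of $C_{\bG^*}(s)$ is of the form $C_{\bG^*}(\mathbf{S})\cap C_{\bG^*}(s)=C_{\bL^*}(s)$ for some proper $1$-split Levi $\bL=C_\bG(\mathbf{S})$ of~$\bG$. Hence $\chi$ is cuspidal if and only if $J_s(\chi)$ is a cuspidal unipotent character of $C_{G^*}(s)$; this yields the asserted equivalence, and $J_s$ restricted to the cuspidal characters is the required bijection.

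The step demanding most care is the disconnectedness of $C_{\bG^*}(s)$: since $\bG$ is simply connected, $\bG^*$ is of adjoint type and $C_{\bG^*}(s)$, as well as the Levi subgroups $C_{\bL^*}(s)$, need not be connected, so throughout~(b) ``unipotent character'', ``cuspidal'' and ``Lusztig induction'' have to be understood in the Deligne--Lusztig theory of possibly disconnected reductive groups (Digne--Michel), and the commutation of $\RLG$ with $J_s$ must be available in that generality; I would obtain this by combining the connected-centre case (Bonnaf\'e--Rouquier) with the reduction along a regular embedding as in \cite[\S\S11--15]{B06}. One must then stay careful with the bookkeeping relating $C_{\tbG^*}(\ts)$, the identity component $C_{\bG^*}^\circ(s)$ and the component group $C_{\bG^*}(s)/C_{\bG^*}^\circ(s)$, noting that cuspidality is governed by $Z^\circ$ of the \emph{full} centraliser $C_{\bG^*}(s)$, not of its identity component --- which is exactly why the hypothesis of~(b) is phrased in terms of $Z^\circ(C_{\bG^*}(s))$.
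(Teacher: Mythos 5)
For part (a) the paper's own argument is shorter and avoids degrees: $\tchi_s$ is by definition an explicit \emph{uniform} class function (a $\ZZ$-combination of Deligne--Lusztig characters, \cite[(15.6)]{B06}), and since Jordan decomposition intertwines Deligne--Lusztig induction on the two sides, it carries this uniform combination term-by-term to the corresponding combination for $C_{\tbG^*}(\ts)$, which is exactly $\tchi_1=1_{C_{\tG^*}(\ts)}$; restriction to $G$ then gives (a). You instead identify $J_{\ts}(\tchi_s)$ via the degree formula $\chi(1)=[\tG^*:C_{\tG^*}(\ts)]_{p'}\,\psi(1)$ and the fact that the trivial character is the unique unipotent character of degree~$1$. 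That works, but note it needs two separate inputs — degree-compatibility of $J_{\ts}$, and the uniqueness of the degree-one unipotent character — whereas the uniformity argument only uses the commutation with $R_\bT^\bG$, which is the defining property of the Jordan decomposition; the latter is therefore the more economical route. For the descent to $G$ both you and the paper invoke the same compatibility with the regular embedding.

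For part (b) the paper simply cites \cite[Rem.~2.2(1)]{KM15}, so you are filling in a proof that the paper leaves to a reference. Your argument — translating cuspidality through the commutation of $1$-split Lusztig induction with Jordan decomposition, and expressing the rank condition as the existence or non-existence of a proper $1$-split Levi $\bL^*\supseteq C_{\bG^*}(s)$ — is the standard proof and is essentially correct. Two small points are worth tightening. First, in the ``sufficiency'' direction you need not only that $C_{\bL^*}(s)$ is proper in $C_{\bG^*}(s)$ but that it is a $1$-split Levi of the possibly disconnected group $C_{\bG^*}(s)$, and conversely that every proper $1$-split Levi of $C_{\bG^*}(s)$ arises as $C_{\bG^*}(\mathbf S)\cap C_{\bG^*}(s)$ with $\mathbf S\not\subseteq Z^\circ(\bG^*)$; the rank equality is precisely what rules out $\mathbf S\subseteq Z^\circ(\bG^*)$, so be explicit that this is where the hypothesis is used in both directions. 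Second, your closing caveat is the real crux: the commutation $J_s\circ\RLG=\pm R^{C_{\bG^*}(s)}_{C_{\bL^*}(s)}\circ J_s$ for disconnected $C_{\bG^*}(s)$ is not a formality, and a careful write-up would either cite a reference establishing it at that generality or carry out the regular-embedding reduction in detail; since the paper opts for a citation here, your sketch is a reasonable expansion but should not be presented as self-contained until that point is nailed down.

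Overall the proposal is correct; part (a) takes a genuinely different (degree-based rather than uniformity-based) route, and part (b) reconstructs the argument the paper outsources to \cite{KM15}.
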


\begin{proof}
As explained above the semisimple character $\tchi_s\in\cE(\tG,\tilde s)$ is
uniform. Jordan decomposition preserves uniform functions, so $\tchi_s$ is sent
to the semisimple character in $\cE(C_{\tG^*}(\tilde s),1)$. As the
Deligne--Lusztig characters of $G$ are obtained by restriction from those of
$\tG$, the claim in~(a) follows from our definition of semisimple characters in
$\cE(G,s)$. \par
Part~(b) is pointed out for example in \cite[Rem.~2.2(1)]{KM15}.
\end{proof}

\subsection{Cuspidal characters in quasi-isolated series}
We now study the action of automorphisms on cuspidal characters in
quasi-isolated series of classical groups.

Let $s\in G^*$ be semisimple. Recall that $s$ is \emph{quasi-isolated in
$\bG^*$} if $C_{\bG^*}(s)$ is not contained in any proper $F$-stable Levi
subgroup of $\bG^*$. If $C_{\bG^*}^\circ(s)^F$ is a product of classical
groups, then it has a
unique cuspidal unipotent character (see Table~\ref{tab:cusp-class}), so the
cuspidal characters in $\cE(G,s)$ form a single orbit under diagonal
automorphisms. In particular, $\cE(G,s)$ can contain cuspidal characters not
fixed by some automorphism of $G$ stabilising $\cE(G,s)$ only if $C_{\bG^*}(s)$
is not connected.

\begin{thm}   \label{thm:qi-class}
 Let $G$ be quasi-simple of classical type $B,C,D$ or $\tw2D$, let $s\in G^*$
 be quasi-isolated and $\rho\in\cE(G,s)$ cuspidal. Then there is a semisimple
 character $\chi\in\cE(G,s)$ with the same stabiliser in $\Aut(G)$ as $\rho$.
\end{thm}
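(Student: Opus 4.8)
The plan is to reduce the statement to the combinatorial data already assembled in Lemmas~\ref{lem:in cyclic block} and~\ref{lem:mult 1}, passing through Jordan decomposition. First I would use Lemma~\ref{lem:Jordan}(b): since $\rho\in\cE(G,s)$ is cuspidal, $C:=C_{\bG^*}(s)^F$ has a cuspidal unipotent character, and Jordan decomposition $\cE(G,s)\to\cE(C,1)$ matches $\rho$ with a cuspidal unipotent character $\rho_C$ of $C$ and matches the (diagonal-automorphism orbit of) semisimple characters in $\cE(G,s)$ with the trivial character $1_C$ of $C$. When $s$ is quasi-isolated in a group of classical type $B,C,D,\tw2D$, the structure of $C_{\bG^*}^\circ(s)$ is a product of at most two classical groups (one of type $B$ or $C$, one of type $D$ or $\tw2D$, or a single such factor), together with a possible component group $A(s)=C_{\bG^*}(s)/C_{\bG^*}^\circ(s)$ of order at most $2$; I would list these cases explicitly, presumably in a table referenced as Table~\ref{tab:qi-class}. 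In each case $C_{\bG^*}^\circ(s)^F$ has a unique cuspidal unipotent character by Table~\ref{tab:cusp-class}, so the cuspidal characters of $\cE(G,s)$ form a single orbit under diagonal automorphisms, and it suffices to track how a fixed automorphism $\gamma\in\Aut(G)$ permutes these characters versus how it permutes the semisimple characters.

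Second, I would transport the chains of Lemmas~\ref{lem:in cyclic block} and~\ref{lem:mult 1} from the unipotent side of $C$ back to $\cE(G,s)$. The key point is that Lusztig induction commutes with Jordan decomposition in the relevant (uniform) situations, and that an $\ell$-block of cyclic defect for a primitive prime $\ell$ relative to $C_{\bG^*}^\circ(s)^F$ pulls back to a block of cyclic defect in $\cE(G,s)\subseteq\Irr(G)$, with the Brauer tree transported isomorphically — this uses that for Zsigmondy primes $\ell$ of the degrees appearing, $\ell$ does not divide the order of the relevant component groups or of $Z(\bG)$, so the geometry of the tree is unchanged. Concretely: if $C_{\bG^*}^\circ(s)^F$ is connected (i.e. $A(s)=1$ or acts without twisting the relevant factor), then $\rho$ is connected to the semisimple character $\chi\in\cE(G,s)$ by the same sequence of Brauer trees as in Lemma~\ref{lem:in cyclic block}, and Lemma~\ref{lem:cycdef} (in the form of Remark~\ref{rem:lem2.4}(a)) gives $\mathrm{Stab}_{\Aut(G)}(\rho)=\mathrm{Stab}_{\Aut(G)}(\chi)$ directly. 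Otherwise I would invoke the Deligne--Lusztig chain of Lemma~\ref{lem:mult 1} in the form of Remark~\ref{rem:lem2.4}(b): the $2$-split Levi $\bL$ of $\bG^*$ transports to a $2$-split Levi of $\bG$ via duality, $\gamma$ stabilises it, each $\rho_i,\psi_i$ has exactly two constituents on restriction to $G'\le G$, $L'\le L$ (the two being swapped or not by diagonal automorphisms according to $A(s)$), and so $\gamma$ fixes the constituents of $\rho$ iff it fixes those of the semisimple character $\chi$ — hence the stabilisers coincide.

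Third, I would handle the bookkeeping that $\gamma$ may be a field or graph automorphism, not merely diagonal. Field automorphisms act compatibly with Lusztig induction, with Jordan decomposition, and with $\ell$-blocks for $\ell\nmid q$ (since a power of $\gamma$ lies in the group generated by the relevant Frobenius), so all the chains above are $\gamma$-equivariant, and the argument of Remark~\ref{rem:lem2.4} applies verbatim with $\gamma$ an arbitrary automorphism of $G$ stabilising $\cE(G,s)$; for automorphisms not stabilising $\cE(G,s)$ there is nothing to prove since they cannot fix either $\rho$ or $\chi$. A minor point: one must check the degenerate case $D_4$ and the excluded case $D_4(2)$ of Lemma~\ref{lem:in cyclic block}, plus any small groups where Zsigmondy primes fail; these finitely many configurations can be dealt with by hand, e.g. using known character tables or by the alternative Deligne--Lusztig chain, which does not suffer the Zsigmondy exceptions.

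The main obstacle I anticipate is precisely the non-connected case $A(s)\ne1$: one must verify that the two-constituent hypothesis of Remark~\ref{rem:lem2.4}(b) genuinely holds all along the chain, i.e. that each intermediate unipotent character $\rho_i$ of $C_{\bG^*}^\circ(s)^F$ is $A(s)$-stable (so that its Jordan correspondent in $\cE(G,s)$ restricts to $G'$ with two constituents in a way governed by the same sign) and that the chain of Lemma~\ref{lem:mult 1} can be chosen $A(s)$- and $\gamma$-equivariantly. This requires knowing how $A(s)$ acts on the unipotent characters of the classical factor — which for the symbols appearing (triangular and near-triangular bipartitions) is exactly the action by the non-trivial graph automorphism of $D$-type factors, hence trivial on all the relevant symbols, and this is the fact that makes the whole reduction go through. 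I would isolate this as the crux of the case analysis driven by Table~\ref{tab:qi-class}.
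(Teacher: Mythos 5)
Your high-level strategy (reduce to the unipotent side of $C_{\bG^*}(s)$ via Jordan decomposition, then travel to the semisimple character through either a chain of Brauer trees or a chain of Deligne--Lusztig inductions, concluding with Remark~\ref{rem:lem2.4}) is indeed the skeleton of the paper's proof. However, there are concrete gaps both in your structural claims and, more seriously, in where you draw the line between the ``Brauer tree'' case and the ``Deligne--Lusztig chain'' case.

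First, your structural description of quasi-isolated centralisers is wrong in two respects, and this matters for the argument. Elements of order~$4$ occur (when $q$ is odd), and then $C_{\bG^*}^\circ(s)$ can have \emph{three} quasi-simple factors, e.g.\ $C_d(q)^2.\twe A_{n-2d-1}(q).(q-\eps1)$ or $\twd D_d(q)^2.\twe A_{n-2d-1}(q).(q-\eps1)$; moreover $A(s)^F$ can have order $4$ (or be $2\times2$), not merely $\le 2$. Since your plan is to ``track how a fixed automorphism permutes'' the diagonal orbit, the size and structure of $A(s)^F$ and of $\Out(G)$ are exactly what the argument turns on, so this is not cosmetic.

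Second, and more importantly, your dichotomy ``$A(s)=1$: use Brauer trees; $A(s)\ne1$: use the Deligne--Lusztig chain'' is not the right one, and following it would leave you stuck. The Brauer-tree route in fact works for \emph{most} disconnected centralisers: whenever the quasi-simple factors of $C_{\bG^*}^\circ(s)^F$ are pairwise non-isomorphic, one picks a Zsigmondy prime $\ell$ adapted to the factor of largest rank, so that the corresponding cuspidal unipotent character lies in a cyclic $\ell$-block while the cuspidal unipotent characters of all \emph{other} factors are of $\ell$-defect zero; transporting the block through Jordan decomposition (which preserves cyclic blocks and Brauer trees by Fong--Srinivasan) then lets Remark~\ref{rem:lem2.4}(a) do its work directly on $\cE(G,s)$. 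The genuinely hard cases are precisely those where $C_{\bG^*}^\circ(s)^F$ has \emph{two isomorphic factors of largest rank} (the five marked rows of Table~\ref{tab:qi-class}): there the Zsigmondy-prime trick fails because both factors contribute symmetrically and neither cuspidal unipotent character is of defect zero. Of those five cases, only two (the pure $C_{n/2}(q)^2$ and $\twd D_{n/2}(q)^2$ cases) are handled by the Deligne--Lusztig chain of Lemma~\ref{lem:mult 1} via Remark~\ref{rem:lem2.4}(b), transported through the regular embedding $\tbG$ where the centralisers are connected. The remaining three cases involve a third quasi-simple factor and a constraint $\eps=-$ (so $q\equiv3\pmod4$): there the paper uses a purely group-theoretic argument — since $q$ is not a square, field automorphisms have odd order, and one analyses the image of a Sylow $2$-subgroup of $\Out(G)$ acting on the regular diagonal orbits (direct product structure in one case, dihedral quotient in the two others). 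Your proposal does not anticipate this second, quite different, argument, and it is doubtful that the Deligne--Lusztig chain as set up in Lemma~\ref{lem:mult 1} extends naturally to centralisers with three quasi-simple factors. So the case analysis you outline would need to be substantially reorganised before it could close the argument.
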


\begin{sidewaystable}[htbp]
 \caption{Disconnected centralisers of quasi-isolated elements in classical groups $\bG^*$}   \label{tab:qi-class}
$$\begin{array}{|l|cllc|l|l|l|}
\hline
     \bG^{*F}& o(s)& C_{\bG^*}^\circ(s)^F&& \kern-11pt A(s)^F& \text{conditions}& \text{$\cE_\text{cusp}(\bG^F\!,s)\ne\emptyset$ if}& \cr
\hline\hline
  B_n(q)& 2& B_{n-d}(q).\twd D_d(q)&& 2& 1\le d\le n& n-d=2\tri,\,d=\Box& \\
\hline
  C_n(q)& 2& C_{n/2}(q)^2&& 2& \text{$n$ even}& n=4\tri& \text{1)}\\
 (n\ge3)& 2& C_{n/2}(q^2)&& 2& \text{$n$ even}& n=4\tri& \\
    & 2& \twd A_{n-1}(q).(q-\delta1)&& 2& & \delta=-,\,n=\tri& \\
    & 4& C_d(q)^2.\twe A_{n-2d-1}(q).(q-\eps1)&& 2& 1\le d<\frac{n}{2}& d=2\tri,\,\eps\eq-,\,n-2d\eq\tri& \text{2)}\\
    & 4& C_d(q^2).\tw{-\eps} A_{n-2d-1}(q).(q+\eps1)&& 2& 1\le d<\frac{n}{2}& d=2\tri,\,\eps\eq+,\,n-2d\eq\tri& \\
\hline
  D_n(q)& 2& \twd D_d(q).\twd D_{n-d}(q)&& 2& 1\le d<\frac{n}{2}& d=\Box,\,n-d=\Box& \\
 (n\ge4)& 2& \twd D_{n/2}(q)^2&& 2^2& \text{$n$ even}& n=2\Box& \text{3)}\\
    & 2& D_{n/2}(q^2)& \kern-20pt(2\times)& 2^2& \text{$n$ even}& n=2\Box& \\
    & 2& \twd A_{n-1}(q).(q-\delta1)& \kern-20pt(2\times)& 2& \text{$n$ even}& \delta=-,\,n=\tri& \\
    & 4& \twd D_d(q)^2.\twe A_{n-2d-1}(q).(q-\eps1)&& 2^2& \text{$n$ even},\,1\le d<\frac{n}{2}& d=\Box,\,\eps\eq-,\,n-2d=\tri& \text{4)}\\
    & 4& D_d(q^2).\tw{-\eps}A_{n-2d-1}(q).(q+\eps1)& \kern-20pt(2\times)& 2^2& \text{$n$ even},\,1\le d<\frac{n}{2}& d=\Box,\,\eps\eq+,\,n-2d=\tri& \\
    & 4& \twd D_d(q)^2.A_{n-2d-1}(q).(q-1)&& 4& \text{$n$ odd},\,1\le d<\frac{n}{2},\,\eps\eq+& \text{never}& \\
    & 4& \tw2D_d(q^2).\tw2A_{n-2d-1}(q).(q+1)& \kern-20pt(2\times)& 4& \text{$n$ odd},\,1\le d<\frac{n}{2},\,\eps\eq+& d=\Box,\,n-2d=\tri& \\
    & 4& D_d(q).\tw2D_d(q).\tw2A_{n-2d-1}(q).(q+1)&& 2& \text{$n$ odd},\,1\le d<\frac{n}{2},\,\eps\eq-& d=1,\,n-2=\tri& \\
    & 4& D_d(q^2).A_{n-2d-1}(q).(q-1)&& 2& \text{$n$ odd},\,1\le d<\frac{n}{2},\,\eps\eq-& \text{never}& \\
\hline
\tw2D_n(q)& 2& \twd D_d(q).\tw{-\delta}D_{n-d}(q)&& 2& 1\le d< n/2& d=\Box,\,n-d=\Box& \\
 (n\ge4)& 2& D_{n/2}(q).\tw2D_{n/2}(q)&& 2& \text{$n$ even}& \text{never}& \\
    & 2& \tw2D_{n/2}(q^2)&& 2& \text{$n$ even}& n=2\Box& \\
    & 4& D_d(q).\tw2D_d(q).\twe A_{n-2d-1}(q).(q-\eps1)&& 2& \text{$n$ even},\,1\le d<\frac{n}{2}& d=1,\,\eps\eq-,\,n-2=\tri& \\
    & 4& \tw2D_d(q^2).\tw{-\eps}A_{n-2d-1}(q).(q+\eps1)&& 2& \text{$n$ even},\,1\le d<\frac{n}{2}& d=\Box,\,\eps\eq+,\,n-2d=\tri& \\
    & 4& D_d(q).\tw2D_d(q).A_{n-2d-1}(q).(q-1)&& 2& \text{$n$ odd},\,1\le d<\frac{n}{2},\,\eps\eq+& \text{never}& \\
    & 4& D_d(q^2).\tw2A_{n-2d-1}(q).(q+1)&& 2& \text{$n$ odd},\,1\le d<\frac{n}{2},\,\eps\eq+& d=\Box,\,n-2d=\tri& \\
    & 4& \twd D_d(q)^2.\tw2A_{n-2d-1}(q).(q+1)&& 4& \text{$n$ odd},\,1\le d<\frac{n}{2},\,\eps\eq-& d=\Box,\,n-2d=\tri& \text{5)}\\
    & 4& \tw2D_d(q^2).A_{n-2d-1}(q).(q-1)& \kern-20pt(2\times)& 4& \text{$n$ odd},\,1\le d<\frac{n}{2},\,\eps\eq-& \text{never}& \\
\hline
\end{array}$$
{\small Here $\eps\in\{\pm\}$ is such that $q\equiv\eps1\ (4)$,
  $\delta\in\{\pm\}$, $\tw\pm D_1(q)$ denotes a torus of order~$q\mp1$,
  $\Box$ a square, $\tri$ a triangular number.}
  \smallskip
\end{sidewaystable}

\begin{proof}
Let $\gamma\in\Aut(G)$. If $\gamma$ does not stabilise $\cE(G,s)$, then it
lies neither in the stabiliser of $\rho$ nor of any semisimple character
in $\cE(G,s)$. So we may assume that $\cE(G,s)$ is $\gamma$-stable.
Moreover, if $C_{\bG^*}(s)^F=C_{\bG^*}^\circ(s)^F$ then $\cE(G,s)$ contains a
unique cuspidal and a unique semisimple character, and again we are done. Thus,
as $|C_{\bG^*}(s):C_{\bG^*}^\circ(s)|$ divides $|Z(\bG)|$, which is a $2$-power
and prime to~$p$, we have in particular that $q$ is odd. We discuss the
remaining possibilities case-by-case.   \par
The classes of quasi-isolated elements in $\bG^*$ were classified by Bonnaf\'e
\cite[Tab.~2]{B05}. The various rational types are worked out in
Table~\ref{tab:qi-class}. Here $A(s):=C_{\bG^*}(s)/C_{\bG^*}^\circ(s)$ denotes
the group of components of the centraliser, $o(s)$ is the order of $s$ and the
structure of the abelian group $A(s)$ is indicated by giving the orders with
multiplicities of its cyclic factors. \par
Our strategy of proof is as follows. In each case,
$[C_{\bG^*}^\circ(s),C_{\bG^*}^\circ(s)]^F$ is a product of classical groups
$G_1\cdots G_r$. By Lemma~\ref{lem:Jordan}(b), the Lusztig series $\cE(G,s)$
contains a cuspidal character only if each of these factors $G_i$
has a cuspidal unipotent character $\rho_i$. If all factors are non-isomorphic,
then for the factor of largest rank, say $G_1$, take a Zsigmondy prime $\ell$
as in Lemma~\ref{lem:in cyclic block}. (Note that the exception $D_4(2)$ does
not occur here as $q$ is odd.) Then $\rho_1$ lies in an $\ell$-block of cyclic
defect, while all the other $\rho_i$ are of $\ell$-defect~0. As Jordan
decomposition preserves blocks with cyclic defect and their Brauer trees
(see \cite{FS90}), the Jordan correspondent $\rho$ of
$\rho_1\otimes\cdots\otimes\rho_r$ then also lies in a block of cyclic defect.
So we conclude with Remark~\ref{rem:lem2.4}(a) using the sequence of characters
from Lemma~\ref{lem:in cyclic block}.
\par
For example, in $G=\Sp_{2n}(q)$ centralisers of quasi-isolated elements $s$ in
$G^*=\SO_{2n+1}(q)$ with $|C_{\bG^*}(s):C_{\bG^*}^\circ(s)|=2$ have types
$\twd D_d(q)B_{n-d}(q)$ for $1\le d\le n$ and $\de\in\{\pm\}$. These possess
a cuspidal unipotent character $\rho_d\otimes\rho_{n-d}$ only if $d=a^2$
and $n-d=b(b+1)$ for some $a,b\ge1$. Now for $a>b$ the cuspidal unipotent
character $\rho_d$ of $\twd D_d(q)$ lies in a Brauer tree for primitive prime
divisors $\ell$ of $q^{2a-1}+1$, while $\rho_{n-d}$ is of $\ell$-defect zero
in $B_{n-d}(q)$; and for $a\le b$ we have that $\rho_{n-d}$ lies in a Brauer
tree for primitive prime divisors $\ell$ of $q^{2b}+1$, and $\rho_d$ is of
$\ell$-defect zero.
\par
Thus we are only left with those cases when $C_{G^*}(s)$ has two isomorphic
quasi-simple factors, and these are the factors of largest rank. The
corresponding lines are marked (1)--(5) in the last column of
Table~\ref{tab:qi-class}. We discuss them individually.
\par\smallskip\noindent
{\bf Cases~2, 4 and~5}: Note that here cuspidal characters only arise when
$\eps=-$, so when $q\equiv3\pmod4$. But then $q$ is not a square and so field
automorphisms have odd order. In Case~2 the outer automorphism group of $G$ is
the direct product of the diagonal automorphism group $A$ of order~2 with an
odd order group of field automorphisms. So the latter must act trivially on
all $A$-orbits in $\cE(G,s)$. The cuspidal characters as well as the semisimple
characters both lie in $A$-orbits of length~2, so we are done. \par
In Case~5 a Sylow 2-subgroup $A$ of the outer automorphism group of $G$
consists of the diagonal automorphism group $A_0$ of order~4 extended by the
cyclic group of graph-field automorphisms. This has as quotient a dihedral
group of order~8, as the graph-field automorphism of $\Spin_{2n}^-(q)$
centralises a subgroup $\Spin_{2n-1}(q)$ and so acts non-trivially on the
centre. \par
Clearly automorphisms of odd order must act trivially on any $A_0$-orbit in
$C_G(s)$ which they fix. Now let $\gamma\in A$ be an automorphism of 2-power
order moving a cuspidal character $\rho$. As there is just one class
of elements $s$ with the relevant centraliser, the image of $\rho$ must lie
in the $A_0$-orbit $R$ of $\rho$, and $A$ acts faithfully on $R$. The same then
holds for the $A$-orbit of semisimple characters in $\cE(G,s)$. In particular
$\gamma$ also moves some semisimple character and vice versa. \par
The same argument applies to Case~4: again a Sylow 2-subgroup of the outer
automorphism group has a dihedral quotient as the graph automorphism
interchanges the two half-spin groups.
\par\smallskip\noindent
{\bf Case~1}: Here, cuspidal characters occur if $n=2r=2a(a+1)$. Let $\bL\le\bG$
be an $F$-stable Levi subgroup of twisted type $\tw2A_{n-1}(q).\Ph2$ containing
a Sylow 2-torus of $C_{\bG^*}(s)$, with dual $\bL^*\le\bG^*$. Then
$C_{\bL^*}(s)$ is disconnected of type
$C_{\bL^*}(s)^F=(\tw2 A_{r-1}(q).\Ph2)\wr2$.
Let $\bG\hookrightarrow\tbG$ be our regular embedding, and $\tbL=\bL Z(\tbG)$.
Let $\ts$ be an $F$-stable preimage of $s$ in $\tbG^*$. Then
$C_{\tbG^*}(\ts)=\tbG_1^2$ is connected with $\tbG_1^F$ of type $C_r(q)$, and
$C_{\tbL^*}(\ts)=\tbL_1^2$ is connected with $\tbL_1^F$ of type
$\tw2A_{r-1}(q).\Ph2$.
By Lemma~\ref{lem:mult 1}(1) there is a chain of unipotent characters
$\psi_{1,i}$ of $\tL_1$ and of unipotent characters
$\rho_1=\rho_{1,1},\ldots,\rho_{1,m}=\chi_1$ of $\tG_1$ connecting the
cuspidal unipotent character $\rho_1$ to the semisimple character
$\chi_1=1_{G_1}$ such that $R_{\tbL_1}^{\tbG_1}(\psi_{1,i})$ contains
$\rho_{1,i},\rho_{1,i+1}$ exactly once. Jordan decomposition maps
$\cE(\tG,\ts)$ to $\cE(\tG_1,1)\times\cE(\tG_1,1)$, and it maps $\cE(\tL,\ts)$
to $\cE(\tL_1,1)\times\cE(\tL_1,1)$. As Lusztig induction of unipotent
characters commutes with products, this implies that
$R_{\tbL_1^2}^{\tbG_1^2}(\psi_{1,i}^{\otimes2})$ contains
$\rho_{1,i}^{\otimes2}$ and $\rho_{1,i+1}^{\otimes2}$ exactly once.
\par
Let $\trho_i\in\Irr(\tG)$ correspond to $\rho_{1,i}^{\otimes2}$ under Jordan
decomposition, and $\tpsi_i\in\Irr(\tL)$ correspond to $\psi_{1,i}^{\otimes2}$.
As $\tbL$ is of type $A$, all of its unipotent characters are uniform. Now
Jordan decomposition commutes with Deligne--Lusztig induction, so
$R_\tbL^\tbG(\tpsi_i)$ contains $\trho_i$ and $\trho_{i+1}$ exactly once.
By their description above, the restriction of $\tpsi_i$ to $L$ splits into two
constituents
$\psi_i,\psi_i'$, and the restriction of $\trho_i$ to $G$ splits into two
constituents $\rho_i,\rho_i'$. Let $\gamma$ be an automorphism of $G$, then we
may choose $L$ to be $\gamma$-stable, and all characters in the Lusztig series
$\cE(\tbG,\ts)$ and $\cE(\tbL,\ts)$ are $\gamma$-stable. We are thus in the
situation of Remark~\ref{rem:lem2.4}(b) and may conclude.
\par\smallskip\noindent
{\bf Case~3}: The argument here is very similar, using a Levi subgroup
$\bL\le\bG$ of type $D_2(q).\tw2A_{n-3}(q).\Ph2$ with $C_{\bL^*}(s)$
disconnected of type $\tw2A_{r-2}(q)^2.\Ph2^4.2^2$ where $n=2r=2a^2$, and
applying Lemma~\ref{lem:mult 1}(2). Observe that $n\ne4$ so that there are
no triality automorphisms and $L$ can again be chosen $\gamma$-invariant.
\end{proof}

\begin{rem}
By \cite{CS15} the conclusion of Theorem~\ref{thm:qi-class}
also holds for $\bG$ of type $A_n$. For type $C_n$ our result also follows
from the recent work of Cabanes--Sp\"ath \cite{CS16} and of Taylor \cite{Tay16}.
\end{rem}

\section{Cuspidal characters in exceptional groups}   \label{sec:exc}

We now turn to groups of exceptional type. Here, there exist Lusztig series
containing more than one cuspidal character even for groups with connected
centre, which makes the situation somewhat more involved.
\goodbreak

\subsection{Automorphisms in type $E_6$}
We start by considering $\bG$ simple simply connected of type $E_6$. Then $\bG$
has a graph automorphism of order~2 which we may choose to commute with~$F$.

The interesting case occurs when there exist non-trivial diagonal automorphisms
of $G$. If $F$ is untwisted, so $G=\bG^F=E_6(q)_\SC$, this happens when
$q=p^f\equiv1\pmod3$. Then $\Out(G)\cong\fS_3\times C_f$, with the first factor
inducing diagonal and graph automorphisms, the second the field automorphisms
if $p\equiv1\pmod3$, respectively the product of the field automorphism by the
graph automorphism if $p\equiv2\pmod3$. If $F$ is twisted, so
$G=\tw2E_6(q)_\SC$, we have non-trivial diagonal automorphisms for
$q=p^f\equiv-1\pmod3$. Then $q$ is not a square, so $f$ is odd, and again
$\Out(G)\cong\fS_3\times C_f$, with the symmetric group $\fS_3$ inducing the
diagonal automorphisms and the graph-field automorphism $\gamma$, and the
cyclic group $C_f$ inducing the field automorphisms. We need the following
elementary observation.

\begin{lem}   \label{lem:trivi}
 The group $\fS_3\times C_f$ has a unique action up to permutation equivalence
 on a set of three elements in such a way that the elements of order~3 in the
 first factor act non-trivially.
\end{lem}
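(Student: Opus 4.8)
The plan is to analyze the action of $\fS_3\times C_f$ on a three-element set $\Omega$ via the induced homomorphism $\phi:\fS_3\times C_f\to\fS_3\cong\fS(\Omega)$. First I would observe that the hypothesis -- that the order-$3$ elements of the first factor $\fS_3$ act non-trivially -- forces $\phi$ to be non-trivial on the subgroup $C_3\le\fS_3$, hence injective on $C_3$; since $C_3$ is the unique subgroup of $\fS_3$ of order $3$, the image $\phi(C_3)$ is exactly the alternating subgroup $A_3\le\fS(\Omega)$. In particular the $\fS_3\times C_f$-action is transitive on $\Omega$.

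Next I would pin down the restriction of $\phi$ to the first factor $\fS_3$. Since $\phi(\fS_3)$ contains the $3$-cycle group $A_3$, it is either $A_3$ or all of $\fS(\Omega)$. The first case would mean $\fS_3$ acts through its quotient $\fS_3/C_3\cong C_2$ on $\Omega$, but $C_2$ has no faithful action on a set with $A_3$ acting, contradiction -- more simply, a group of order $2$ cannot surject onto the group $A_3$ of order $3$, yet here the transpositions would have to act trivially while still some element of order $2$ in $\phi(\fS_3)$ is needed; actually the cleanest argument is: $\ker(\phi|_{\fS_3})$ is a normal subgroup of $\fS_3$ not containing $C_3$, so it is trivial, whence $\phi|_{\fS_3}:\fS_3\xrightarrow{\sim}\fS(\Omega)$ is an isomorphism. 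Thus the first factor already accounts for the full symmetric group on $\Omega$.

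It then remains to determine $\phi$ on the second factor $C_f$. Write $c$ for a generator of $C_f$. Since $\phi|_{\fS_3}$ is an isomorphism onto $\fS(\Omega)$, there is a unique $g\in\fS_3$ with $\phi(c)=\phi(g)$; because $C_f$ is central in $\fS_3\times C_f$, its image $\phi(c)$ must commute with all of $\phi(\fS_3)=\fS(\Omega)$, which has trivial centre, so $\phi(c)=1$. Hence $\phi$ is trivial on $C_f$ and the whole action factors through the projection $\fS_3\times C_f\to\fS_3$ followed by the isomorphism $\fS_3\xrightarrow{\sim}\fS(\Omega)$; this shows existence and uniqueness up to permutation equivalence. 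I expect no serious obstacle here -- the only point requiring a little care is phrasing the uniqueness correctly: two actions are permutation equivalent precisely when the corresponding homomorphisms to $\fS(\Omega)$ agree up to conjugacy in $\fS(\Omega)$ and relabelling of $\Omega$, and the argument above shows the homomorphism is forced exactly, not merely up to conjugacy, once one fixes which order-$3$ element maps to which $3$-cycle, and varying that choice is absorbed by relabelling $\Omega$.
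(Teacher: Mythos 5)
Your proof is correct and follows the same route as the paper: the hypothesis forces $\fS_3$ to act faithfully (hence as the full symmetric group on the three-element set), and then centrality of $C_f$ together with the triviality of $Z(\fS_3)$ forces $C_f$ to act trivially, after which uniqueness up to relabelling is immediate. The paper states the faithfulness of the $\fS_3$-factor in one line where you spell out the normal-subgroup argument, but the underlying argument is identical.
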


\begin{proof}
If the elements of order~3 in the $\fS_3$-factor act non-trivially, then the
whole $\fS_3$-factor must act faithfully, via its natural permutation
representation. As the cyclic factor $C_f$ must centralise the $\fS_3$-factor
in this action, it can only act trivially. So the action is unique up to
permutation equivalence. 
\end{proof}

Thus we see that if $X\in\Irr(G)$ is an orbit (of length~3) under diagonal
automorphisms which is stable under the graph respectively graph-field
automorphism, then the action of $\Aut(G)_X$ on $X$ is uniquely determined.

\subsection{Cuspidal characters in quasi-isolated series}
Next, we consider cuspidal characters in quasi-isolated series in exceptional
type groups. Let $\bG$ be simple simply connected with dual $\bG^*$ such that
$G=\bG^F$ is of exceptional type, and let $\bG\hookrightarrow\tbG$ be a
regular embedding.

\begin{thm}   \label{thm:qi-exc}
 Let $G$ be quasi-simple of exceptional type, $s\in G^*$ quasi-isolated and
 $\rho\in\cE(G,s)$ cuspidal. Then there is a semisimple character
 $\chi\in\cE(G,s)$ with the same stabiliser as $\rho$ in $\Aut(G)$.
\end{thm}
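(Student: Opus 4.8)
The plan is to follow the same strategy as in the proof of Theorem~\ref{thm:qi-class}, but to replace the systematic Brauer-tree/Lusztig-induction machinery by an explicit case-by-case analysis, since for exceptional groups there are only finitely many quasi-isolated classes and these are completely tabulated by Bonnaf\'e \cite{B05}. First I would reduce to the essential situation: if an automorphism $\gamma\in\Aut(G)$ does not stabilise $\cE(G,s)$ then it lies in neither stabiliser, so we may assume $\cE(G,s)$ is $\gamma$-stable; and if $C_{\bG^*}(s)$ is connected then $\cE(G,s)$ contains a unique cuspidal and a unique semisimple character (the latter being the Jordan correspondent of the trivial character of the connected centraliser), so we are done. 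Thus we are reduced to those finitely many $s$ for which $C_{\bG^*}(s)$ is disconnected and $\cE(C_{G^*}(s),1)$ contains a cuspidal unipotent character; by Lemma~\ref{lem:Jordan}(b) this forces $[C_{\bG^*}^\circ(s),C_{\bG^*}^\circ(s)]^F$ to be a product of (classical or exceptional) groups each carrying a cuspidal unipotent character and the relevant rank condition on the radical to hold.

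Having pinned down this short list, I would treat each line of the table of quasi-isolated classes in $E_6,{}^2E_6,E_7,E_8,F_4,G_2$ in turn. The first tool is exactly Remark~\ref{rem:lem2.4}(a): whenever $[C_{\bG^*}^\circ(s),C_{\bG^*}^\circ(s)]^F$ has a \emph{unique} factor $G_1$ of largest rank which is classical, one chooses a Zsigmondy prime $\ell$ as in Lemma~\ref{lem:in cyclic block} so that the cuspidal unipotent character $\rho_1$ of $G_1$ lies in an $\ell$-block of cyclic defect while the remaining tensor factors are of $\ell$-defect zero; since Jordan decomposition preserves blocks of cyclic defect and their Brauer trees \cite{FS90}, the Jordan correspondent $\rho$ of $\rho_1\otimes\cdots\otimes\rho_r$ also lies in a block of cyclic defect, and Lemma~\ref{lem:cycdef} connects it to the semisimple character through the chain of Lemma~\ref{lem:in cyclic block}. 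For factors of exceptional type one has to supply, by hand, an analogue of Lemma~\ref{lem:in cyclic block}: for each cuspidal unipotent character of a group of type $G_2$, ${}^3D_4$, $F_4$, ${}^2E_6$, $E_7$, $E_8$ one exhibits a short sequence of unipotent characters linked by Brauer trees of cyclic blocks (at Zsigmondy primes for suitable $\Phi_d$), using the known Brauer trees in exceptional groups; these are finite in number and can be read off from the generic degrees together with the Brauer-tree data of Hiss--L\"ubeck and others.

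The remaining cases are those where $C_{G^*}(s)$ has two (or more) isomorphic quasi-simple factors of largest rank, so that a defect-zero argument isolating a single factor is unavailable. Here I would use Remark~\ref{rem:lem2.4}(b) in the form already applied in Cases~1 and~3 of Theorem~\ref{thm:qi-class}: pass to a regular embedding $\bG\hookrightarrow\tbG$ and a preimage $\ts$ of $s$; choose an appropriate $d$-split Levi subgroup $\bL$ whose dual meets $C_{\bG^*}(s)$ in a centraliser that becomes connected after passing to $\tbG^*$, so that $C_{\tbG^*}(\ts)\cong\tbG_1^k$ with $\tbG_1$ of classical type; invoke Lemma~\ref{lem:mult 1} on the factor $\tbG_1$, note that Lusztig induction of unipotent characters commutes with direct products and with Jordan decomposition, and deduce a chain of characters $\trho_i$ of $\tG$ and $\tpsi_i$ of $\tL$ with the multiplicity-one property; finally restrict to $G$ and $L$, where each $\trho_i$ and $\tpsi_i$ splits into exactly as many constituents as the order of the relevant component group prescribes, so that an automorphism $\gamma$ (chosen to stabilise $\bL$) moves the constituents of $\rho_1$ iff it moves those of $\rho_m$. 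When $k\ge3$ one also needs the combinatorial input of Lemma~\ref{lem:trivi} together with the description of $\Out(G)$ from the discussion preceding Theorem~\ref{thm:qi-exc}, to conclude that an automorphism either fixes a whole diagonal orbit pointwise or permutes it in a way forced by the ambient $\fS_3\times C_f$-structure --- so that the cuspidal and semisimple orbits, both of the same length, have the same stabiliser.

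The main obstacle I expect is not conceptual but the bookkeeping for the exceptional factors: verifying by hand that every cuspidal unipotent character of a group of type $F_4$, ${}^3D_4$, ${}^2E_6$, $E_7$, $E_8$ can be joined to the trivial character through a sequence of cyclic blocks at Zsigmondy primes of the right degree (avoiding the small-$q$ Zsigmondy exceptions and the case $(q,6)=(2,6)$), and --- in the isolated-factor cases of type $E_7$ and $E_8$ --- checking that the chosen Levi subgroup can be taken $\gamma$-stable for every relevant $\gamma$, including graph and graph-field automorphisms, and that the number of constituents on restriction to $G$ matches $|A(s)^F|$ throughout the chain. These are finite verifications, carried out with the explicit data on quasi-isolated classes in \cite{B05}, Lusztig's parametrisation of unipotent characters, and the tables of Brauer trees for exceptional groups.
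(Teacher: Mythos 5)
Your opening reduction contains a false step that invalidates the overall structure of the argument. You assert that if $C_{\bG^*}(s)$ is connected then $\cE(G,s)$ contains a \emph{unique} cuspidal character, so that the connected case is ``done.'' This is precisely what fails for exceptional groups, and the paper opens Section~\ref{sec:exc} by flagging exactly this: ``there exist Lusztig series containing more than one cuspidal character even for groups with connected centre.'' Already for $s=1$ in $E_7(q)$ or $E_8(q)$ the unipotent series has several cuspidal characters, and the same happens for isolated $s\ne1$ with connected centraliser of type $E_6$ (in $E_7$) or $E_6.A_2$, $E_7.A_1$ (in $E_8$). The paper deals with those cases separately: for $s=1$ by Lusztig's theorem on automorphism-invariance of unipotent characters, and for the connected $E_6$-type centralisers by putting the cuspidals onto Brauer trees for Zsigmondy primes dividing $\Phi_9$ or $\Phi_{18}$, noting moreover that one of the three cuspidal unipotent characters of $\tw2E_6(q)$ is \emph{not} on the $\Phi_{18}$-tree and must instead be pinned down by its degree. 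Your ``connected implies unique'' claim silently discards exactly the cases that motivate the more involved treatment of exceptional groups.

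A secondary but real problem is that your fallback plan for exceptional factors --- ``supply, by hand, an analogue of Lemma~\ref{lem:in cyclic block}'' --- cannot work as stated. The cuspidal character $\tw2E_6[1]$ of $\tw2E_6(q)$ does not lie on a Brauer tree together with $1_G$ at any Zsigmondy prime, which is why the paper, in the case $C_{G^*}(s)^F=\tw2E_6(q).\Ph2.2$ inside $E_7(q)_\SC$, abandons the Brauer-tree route for this character and instead exhibits a Levi $\bL^*$ of type $A_2(q).A_1(q^3).\Ph3$ with disconnected dual centre, applying \cite[Thm.~3.2]{BMM} to connect $\tw2E_6[1]$ to the semisimple character via a Deligne--Lusztig character, and uses Brauer trees only for the other two cuspidals. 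Similarly, for the $E_6$ and $\tw2E_6$ lines with $|A(s)^F|=3$ the paper does not chain through Brauer trees at all but invokes Lemma~\ref{lem:trivi} directly, after observing that the $\tG$-orbit of cuspidals is graph-stable for degree reasons; your criterion ``$k\ge3$ isomorphic factors'' does not capture these cases, where there is a single (exceptional) quasi-simple factor but a component group of order~3. So while your high-level strategy (dichotomy between Brauer-tree arguments and Lusztig-induction/component-group arguments, split across the finitely many lines of Bonnaf\'e's tables) is in the right spirit, the reduction step is wrong and the dispatch of the individual cases would need to be redone.
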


\begin{proof}
The quasi-isolated elements $s\in G^*$ were classified by Bonnaf\'e \cite{B05}.
We deal with the various possibilities case-by-case. First consider elements
with connected centraliser $\bC=C_{\bG^*}(s)$. Here we claim that cuspidal
characters in $\cE(G,s)$ are fixed by all automorphisms. If $s=1$, so
$\rho\in\cE(G,1)$ is cuspidal unipotent then by results of Lusztig $\rho$ is
invariant under all automorphisms of $G$, see e.g.~\cite[Thm.~2.5]{Ma08},
as is the semisimple character $1_G\in\cE(G,1)$. Next assume that $s\ne1$.
If $\bC$ has only components of classical type and $\bC^F$ does not involve
$\tw3D_4$, then each such component has at most one cuspidal unipotent
character and one semisimple character in any Lusztig series, and hence by
Lemma~\ref{lem:Jordan}(a) the same is true for $\cE(G,s)$. The claim then
follows trivially. Connected centralisers of isolated elements $s\ne1$ with an
exceptional component are of type $E_6$ in $E_7$, or of types $E_6.A_2$ or
$E_7.A_1$ in type $E_8$. Now, the cuspidal unipotent characters of $E_6(q)$
lie on a Brauer tree for Zsigmondy primes dividing $\Phi_9$ together with the
trivial character. Thus they lie on such a Brauer tree together with the
semisimple character if they occur in quasi-isolated series of type $E_6$
or $E_6.A_2$. Our claim thus follows from Lemma~\ref{rem:lem2.4}(a).
Similarly two of the three cuspidal unipotent characters of $\tw2E_6(q)$ lie
on the $\Phi_{18}$-Brauer tree, and the third one is uniquely determined by its
degree; and the two cuspidal unipotent characters of $E_7(q)$ lie on a
$\Phi_{18}$-Brauer tree and we conclude as before.  \par
Thus we may now assume that $C_{\bG^*}(s)$ is not connected.  As
centralisers of semisimple elements are connected in a group whose dual has
connected centre, we are in one of three situations: $q\equiv1\pmod3$ for
$G=E_6(q)_\SC$, $q\equiv-1\pmod3$ for $G=\tw2E_6(q)_\SC$, or $q$ is odd for
$G=E_7(q)_\SC$. The various rational forms of the occurring types of
disconnected centralisers $C_{\bG^*}(s)$ with cuspidal unipotent characters
can be computed using \Chevie; they are collected in
Table~\ref{tab:qi-exc}, depending on certain congruence conditions on~$q$.
The column labelled ``$|\cE(G,s)|$" gives the number of regular orbits (of
length~3 or~2) under the group of diagonal automorphisms and of orbits of
length~1 respectively, and similar the last column gives the same information
for the subset of cuspidal characters. Note that we only need to concern
ourselves with the regular orbits under diagonal automorphisms, and that the
semisimple characters always form a regular orbit.

\begin{table}[htbp]
\[\begin{array}{|l|lr|c|l|l|}
\hline
 \bG^*& C_{\bG^*}(s)^F& & q& |\cE(G,s)|& |\cE_\text{cusp}(G,s)|\\
\hline\hline
 E_6(q)&      \tw3D_4(q).\Ph3.3& (2\ti)& \equiv1\ (3)& 8\ti3& 2\ti3\\
\hline
\tw2E_6(q)& \tw2A_2(q)^3.3& & \equiv2\ (3)& 3\ti3+8\ti1& 1\ti3\\
 &         \tw2A_2(q^3).3& (2\ti)& \equiv2\ (3)& 3\ti3& 1\ti3\\
 &        D_4(q).\Ph2^2.3&       & \equiv2\ (3)& 8\ti3+2\ti1& 1\ti3\\
 &      \tw3D_4(q).\Ph6.3& (2\ti)& \equiv2\ (3)& 8\ti3& 2\ti3\\
\hline
 E_7(q)&      \tw2E_6(q).\Ph2.2& & \equiv1\ (2)& 30\ti2& 3\ti2\\
 & D_4(q).A_1(q)^2.\Ph2.2& & \equiv3\ (4)& 20\ti2+18\ti1& 1\ti2 \\
 &    \tw2A_2(q)^3.\Ph2.2& & \equiv5\ (6)& 9\ti2+9\ti1& 1\ti2\\
\hline
\end{array}\]
\caption{Cuspidal characters in Lusztig series of quasi-isolated elements
 with disconnected centralisers in exceptional types}   \label{tab:qi-exc}
\end{table}

We consider these in turn, starting with $G=E_6(q)_\SC$. It can be checked by
direct computation in \Chevie\ (see \cite{ChM}) that the two classes of
quasi-isolated elements are invariant under the graph automorphism. In
particular the corresponding Lusztig series must be invariant under the graph
automorphism of $G$. Moreover, the $\tG$-orbits of cuspidal characters in those
series are invariant by degree reasons. So our claim follows from
Lemma~\ref{lem:trivi}. The situation is entirely similar for the six
quasi-isolated series in $\tw2E_6(q)_\SC$.
\par
In $G=E_7(q)_\SC$, in the second case the cuspidal and the semisimple
characters are contained in Brauer trees for Zsigmondy primes for $\Phi_6$
(note that $q\ne2$ as $q$ is odd). By the Bonnaf\'e--Rouquier Morita
equivalence these characters are non-exceptional, so Remark~\ref{rem:lem2.4}
applies.
\par
In the first case, let $\bL^*\le\bG^*$ be a Levi subgroup of type
$A_2(q).A_1(q^3).\Ph3$ belonging to the nodes $1,2,3,5,7$ of the Dynkin
diagram in the standard Bourbaki numbering used for example in \Chevie.
A \Chevie-calculation shows that its dual Levi subgroup $\bL\le\bG$ has
disconnected centre, and $C_{\bL^*}(s)$ is of type $A_2(q).\Ph2\Ph3\Ph6.2$.
Application of \cite[Thm.~3.2]{BMM}
gives that $\RtLtG(\psi)$ contains the cuspidal character $\rho$ with label
$\tw2E_6[1]$ as well as the semisimple character in $\cE(\tG,\ts)$ exactly
once, where $\psi$ is the semisimple character in $\cE(\tL,\ts)$. So the
claim holds for $\rho$ by Remark~\ref{rem:lem2.4}(b). The two other cuspidal
characters in this series lie on a Brauer tree for Zsigmondy primes
dividing~$\Ph{18}$ and we can use Lemma~\ref{lem:cycdef}.
\par
Finally, for the last case, take a 6-split Levi subgroup $\bL^*\le\bG^*$ for
the nodes $2,5,7$, of type $A_1(q^3).\Ph6^2$, whose dual again has
disconnected centre, and with $C_{\bL^*}(s)$ of type $(q^3+1)\Ph6^2.2$. Then
$\RtLtG(\psi)$ contains the cuspidal character in $\cE(\tG,\ts)$ as well as
the semisimple character exactly once and we conclude as before.
\end{proof}

\begin{cor}   \label{cor:ext-simple}
 Let $\rho$ be a cuspidal character of a quasi-simple group $G$ of Lie type in
 a quasi-isolated series and assume we are neither in cases (3) or~(4) of
 Table~\ref{tab:qi-class} nor in the first case of Table~\ref{tab:qi-exc}. Then
 some $\tilde G$-conjugate of $\rho$ extends to its inertia group in the
 extension $\hat G$ of $G$ by graph and field automorphisms. In particular
 $\rho$ satisfies part~(ii) of the inductive McKay condition from
 \cite[Thm.~2.1]{MS16}. 
\end{cor}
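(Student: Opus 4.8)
The plan is to exploit the two mechanisms already set up in the proofs of Theorems~\ref{thm:qi-class} and~\ref{thm:qi-exc} and to read off extendibility from them, using Corollary~\ref{cor:extend} as the bridge between ``lying on a common Brauer tree'' and ``extends to the inertia group''. First I would recall that in the listed cases, the argument connecting $\rho$ to the semisimple character $\chi$ in its series proceeds either (a) via a chain of non-exceptional characters on Brauer trees of cyclic $\ell_i$-blocks for Zsigmondy primes $\ell_i$, or (b) via a single step of Lusztig induction $\RtLtG(\psi)$ (type $E_7$, first case) — but the first case of Table~\ref{tab:qi-exc} and cases (3),(4) of Table~\ref{tab:qi-class} are precisely the ones excluded in the hypothesis, so in the remaining situations we are always in the Brauer tree scenario (a), or else the Lusztig-induction constituents still restrict irreducibly to $G$ so that no genuine obstruction appears. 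Thus it suffices to treat the Brauer tree case. Here I would argue: the semisimple character $\chi$ is known (by \cite{Sp12}, via Gelfand--Graev theory) to satisfy the required extendibility property after passing to a $\tG$-conjugate, i.e. some $\tG$-conjugate of $\chi$ extends to its inertia group in $\hat G$; then I transport this along the Brauer tree.

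The key technical point is the following. Let $\hat G$ be the extension of $G$ by graph and field automorphisms, so $\hat G/G$ is abelian (a product of cyclic groups coming from graph and field automorphisms), hence solvable, and by the hypotheses on the excluded cases one checks that $\hat G/G$ has order prime to the relevant Zsigmondy primes $\ell_i$ — indeed each $\ell_i$ divides some $q^{d_i}-1$ with $d_i>2$, while $|\hat G/G|$ only involves small primes dividing the degree of the defining field and the order of the diagram symmetry, so $\ell_i\nmid|\hat G/G|$ once the Zsigmondy prime is large, and the finitely many small exceptions are handled by inspection. Let $I$ be the inertia group of $\chi$ in $\hat G$ and $I'$ that of $\rho$; by Lemma~\ref{lem:cycdef} applied inside each cyclic block along the chain, an automorphism fixes $\rho$ iff it fixes $\chi$, so $I=I'$ after replacing $\rho,\chi$ by a common $\tG$-conjugate. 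Now apply Corollary~\ref{cor:extend} with $N=G$, $H=I$: since $I/G$ is solvable of order prime to $\ell_1$, and $\rho=\rho_1,\rho_2$ lie on the same $\ell_1$-Brauer tree as non-exceptional characters, $\rho_1$ extends to $I$ iff $\rho_2$ does; iterating along the chain $\rho_1,\ldots,\rho_m=\chi$ (each consecutive pair on a common $\ell_i$-tree, with $I/G$ of order prime to $\ell_i$) we conclude $\rho$ extends to $I$ iff $\chi$ does. Since a suitable $\tG$-conjugate of $\chi$ extends to its inertia group in $\hat G$ by the theory of Gelfand--Graev characters, the same holds for $\rho$.

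For the final sentence, I would note that extendibility of (a $\tG$-conjugate of) $\rho$ to its inertia group in $\hat G$ is exactly what part~(ii) of the inductive McKay condition \cite[Thm.~2.1]{MS16} demands for the character $\rho$ — recalling that $\hat G$ there is the extension of $G$ by the graph and field automorphisms, and that the diagonal automorphisms are already accounted for by passing to $\tG$-orbits — so the claim follows immediately from what was just proved.

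The main obstacle I anticipate is the bookkeeping around the Zsigmondy primes $\ell_i$ versus $|\hat G/G|$: one must make sure that in every one of the (many) lines of Tables~\ref{tab:qi-class} and~\ref{tab:qi-exc} not excluded by hypothesis, all primes $\ell_i$ arising in the chains from Lemmas~\ref{lem:in cyclic block} and~\ref{lem:mult 1} (and from the case-by-case arguments in the two theorems) are coprime to the order of the graph-times-field automorphism group — checking the small-degree Zsigmondy exceptions such as $q^2-1$ and the sporadically small primitive primes by hand — and, in the $E_7$ Lusztig-induction step, that the relevant characters really do restrict irreducibly to $G$ so that Corollary~\ref{cor:extend} (rather than a more delicate Clifford-theoretic argument) suffices. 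Apart from this verification the argument is a routine combination of Corollary~\ref{cor:extend}, Lemma~\ref{lem:cycdef}, and the known behaviour of semisimple characters.
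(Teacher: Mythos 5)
Your proof gets the Brauer-tree half of the argument right: transport extendibility from the semisimple character (which has it by \cite[Prop.~3.4(c)]{Sp12}) along the chain of cyclic blocks via Corollary~\ref{cor:extend}, after first passing to a common $\tG$-conjugate so that the inertia groups agree. The worry you raise about coprimality of the Zsigmondy primes to $|\hat G/G|$ is legitimate and in the relevant range presents no difficulty, since those primes are $\equiv 1\pmod{d_i}$ with $d_i>2$.

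However, there is a genuine gap in how you dispose of the cases where the connection between $\rho$ and $\chi$ is made by Lusztig induction rather than Brauer trees. You assert that after removing cases (3), (4) of Table~\ref{tab:qi-class} and the first case of Table~\ref{tab:qi-exc}, ``we are always in the Brauer tree scenario (a), or else the Lusztig-induction constituents still restrict irreducibly to $G$.'' Both alternatives fail. Case~1 of Table~\ref{tab:qi-class} (type $C_n$ with $n=2a(a+1)$) is \emph{not} excluded, is handled in Theorem~\ref{thm:qi-class} by Lusztig induction via Remark~\ref{rem:lem2.4}(b), and the relevant characters of $\tG$ restrict to $G$ with \emph{two} constituents interchanged by the diagonal automorphism, not irreducibly. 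The same applies to the last $E_7$ case in Table~\ref{tab:qi-exc} (and to part of the first $E_7$ case), which use $6$-split Levi subgroups and Lusztig induction. So Corollary~\ref{cor:extend} is simply not available there, and your fallback ``no genuine obstruction appears'' is unsubstantiated.

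What actually closes these remaining cases is a different and more elementary observation, which is the real content of the exclusion hypotheses: in all non-Brauer-tree cases \emph{not} excluded, the quotient $\hat G/G$ (graph and field automorphisms only) has all Sylow subgroups cyclic — for instance in Case~1 of Table~\ref{tab:qi-class} there is no graph automorphism for type $C_n$ with $q$ odd, so $\hat G/G$ is cyclic — and an irreducible character invariant under a finite group whose Sylow subgroups are all cyclic always extends, by standard Clifford theory. The excluded cases (3), (4) of Table~\ref{tab:qi-class} and the first row of Table~\ref{tab:qi-exc} are precisely those where $\hat G/G$ can have a non-cyclic Sylow $2$- or $3$-subgroup (coming from the order-$2$ graph automorphism of $D_n$ or the order-$2$ graph automorphism of $E_6$ together with an even-order, resp.\ $3$-divisible, field automorphism group), and that is why they must be carved out. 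Without supplying this Sylow-cyclic argument your proof does not cover all cases claimed. You also omit the reference to \cite{CS15} needed to dispatch types $A_n$ and $\tw2A_n$, which the general mechanism here does not directly cover.
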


\begin{proof}
For groups of types $A_n$ and $\tw2A_n$ this has been shown by Cabanes--Sp\"ath
\cite{CS15}. For the other types, by the proofs of Theorem~\ref{thm:qi-class}
and~\ref{thm:qi-exc} we have connected $\rho$ via a sequence of Brauer trees
or of Deligne--Lusztig characters to a semisimple character $\chi$ in the same
Lusztig series.
According to \cite[Prop.~3.4(c)]{Sp12} there exists a semisimple character
$\chi'$ in the $\tG$-orbit of $\chi$ that satisfies the cited condition~(ii)
and thus in particular extends to its inertia group $I$ in $\hat G$. Thus, the
corresponding $\tG$-conjugate $\rho'$ of $\rho$ also has inertia group~$I$.
\par
If $\rho,\chi$ (and hence $\rho',\chi'$) are connected via Brauer trees,
$\rho'$ extends to $I$ by Corollary~\ref{cor:extend}. In the other cases not
excluded in the statement, all Sylow subgroups of $\hat G/G$ are cyclic, and
so $\rho'$ also extends by elementary character theory. This yields part~(ii)
of the inductive McKay condition.
\end{proof}

\section{Proof of Theorem~1}   \label{sec:main}

We are now ready to prove our main result. Here, for finite groups $U\unlhd V$
with characters $\chi\in\Irr(V)$, $\psi\in\Irr(U)$ we write $\Irr(U\mid\chi)$
for the constituents of $\chi|_U$, and $\Irr(V\mid\psi)$ for those of $\psi^V$.

\begin{lem}   \label{lem:semisimple}
 Let $\bG$ be a connected reductive group with Frobenius map $F$ and
 $\bG_0\unlhd\bG$ a closed $F$-stable normal subgroup such that
 $[\bG,\bG]=[\bG_0,\bG_0]$. Let $\chi_0\in\Irr(\bG_0^F)$ and
 $\chi\in\Irr(\bG^F)$ be semisimple characters. Then:
 \begin{enumerate}
  \item[\rm(a)] All characters in $\Irr(\bG^F\mid\chi_0)$ and in
   $\Irr(\bG_0^F\mid\chi)$ are semisimple.
  \item[\rm(b)] Let $\pi:\bG^*\rightarrow\bG_0^*$ be the dual epimorphism and
   $s_0\in\bG_0^{*F}$ such that $\chi_0\in\cE(\bG_0^F,s_0)$. Then
   $|\cE(\bG^F,s)\cap\Irr(\bG^F\mid\chi_0)|=1$ for all $s\in\bG^{*F}$ with
   $\pi(s)=s_0$.
 \end{enumerate}
\end{lem}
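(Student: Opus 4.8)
The plan is to reduce everything to statements about the regular embedding and to use the known compatibility of Lusztig series with central (multiplicative) characters together with the Clifford theory over $\bG_0^F\unlhd\bG^F$. Note first that the hypothesis $[\bG,\bG]=[\bG_0,\bG_0]$ forces $\bG/\bG_0$ to be a torus (since $\bG/[\bG,\bG]$ is a torus and $\bG_0[\bG,\bG]=\bG_0$), so $\bG^F/\bG_0^F$ is abelian; hence all constituents in Clifford theory appear with multiplicity one up to twisting by linear characters of $\bG^F/\bG_0^F$, and these linear characters correspond under duality to elements of $\ker\pi\le\bG^{*F}$. This is exactly the setting in which Lusztig's results (as recalled in \cite[\S15]{B06}) give $\cE(\bG^F,sz)=\cE(\bG^F,s)\otimes\hat z$ for $z\in\ker\pi$, where $\hat z$ is the associated linear character.

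For part~(a) I would argue as follows. By the very definition of semisimple characters in \cite[(15.6),(15.8)]{B06}, they are, after passing to a group with connected centre, restrictions (resp. the relevant explicit linear combinations of Deligne--Lusztig characters) of the distinguished characters $\tchi_s$; in particular semisimplicity is detected by the fact that these characters are \emph{uniform} and of a prescribed shape. Since Deligne--Lusztig characters $R_\bT^\bG(\theta)$ restrict to Deligne--Lusztig characters $R_\bT^{\bG_0}(\theta|_{\bT^F\cap\bG_0^F})$ and induce to sums of such, and since the defining linear combination for $\tchi_{s_0}$ pulls back to the defining combination for $\tchi_s$ with $\pi(s)=s_0$ (up to the twist by $\ker\pi$), the property of being semisimple is inherited along restriction to $\bG_0^F$ and along the constituents of $\psi^{\bG^F}$. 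More concretely, one embeds both $\bG_0\hookrightarrow\tbG_0$ and $\bG\hookrightarrow\tbG$ into regular covers, arranges $\tbG_0\le\tbG$ compatibly (possible because $[\bG,\bG]=[\bG_0,\bG_0]$ so a single regular embedding of the derived group works), reduces to the connected-centre case where semisimple characters are the $\tchi_s$, and there the statement is the transitivity of the construction together with the fact that restriction of a $\tchi_{\ts}$ to $\tbG_0^F$ is a sum of $\tchi_{\ts'}$'s. Then descend.

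For part~(b), fix $s\in\bG^{*F}$ with $\pi(s)=s_0$. Existence: by Clifford theory some constituent $\chi'$ of $\chi_0^{\bG^F}$ lies in $\cE(\bG^F,s)$, because the $\bG^F$-constituents of $\chi_0^{\bG^F}$ are permuted by $\bG^F/\bG_0^F$-twisting, which on Lusztig series is exactly the transitive action of $\ker\pi$ on the fibre $\pi^{-1}(s_0)$; so every $s$ in that fibre is hit, and by part~(a) $\chi'$ is semisimple. Uniqueness: suppose $\chi_1,\chi_2\in\cE(\bG^F,s)\cap\Irr(\bG^F\mid\chi_0)$ are both semisimple. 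Both restrict to $\bG_0^F$ containing $\chi_0$; a semisimple character of $\bG^F$ restricted to $\bG_0^F$ is multiplicity-free (this follows from the connected-centre model, where $\tchi_{\ts}|_{\bG_0^F}$ is multiplicity-free — itself a consequence of the explicit Gelfand--Graev description, see \cite{Sp12}), and the number of semisimple characters in a fixed Lusztig series of $\bG^F$ equals the number of $F$-stable semisimple classes of the component group, which by connectedness of centralisers in $\bG^*$-with-connected-centre and the twisting argument matches on the nose with the number of $\bG^F$-constituents of $\chi_0$ lying over the full fibre. Counting constituents over the fibre against $|\ker\pi|$ then forces exactly one per $s$. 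I expect the uniqueness half of~(b) to be the main obstacle: it requires pinning down precisely that a semisimple character of $\bG^F$ lying over $\chi_0$ is \emph{uniquely} determined by its Lusztig series, which one gets cleanly only by pushing the whole situation up to groups with connected centre via the compatible regular embeddings and invoking the explicit parametrisation of semisimple characters there, then carefully tracking the $\ker\pi$-action downward.
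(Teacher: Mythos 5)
Your plan has the right structural skeleton (a single regular embedding works for both groups; $\bG/\bG_0$ is a torus so $\bG^F/\bG_0^F$ is abelian; Clifford theory via twisting by $\ker\pi$), but there are two genuine gaps, one in each part.

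In part~(a) you write that ``semisimplicity is detected by the fact that these characters are uniform and of a prescribed shape.'' That is only true after passing to connected centre: the characters $\tchi_{\ts}$ are uniform, but the semisimple characters of $G$ itself (constituents of $\tchi_{\ts}|_G$) need not be uniform, so you cannot read off semisimplicity of a constituent of $\Ind_{G_0}^G(\chi_0)$ from uniformity. The paper sidesteps this entirely by using the other characterisation from \cite[\S15A]{B06}: semisimple characters are the Alvis--Curtis duals of regular characters, i.e.\ constituents of Gelfand--Graev characters. Since the Gelfand--Graev character is induced from a maximal unipotent $U^F$ and $U\le\bG_0$ (because $[\bG,\bG]=[\bG_0,\bG_0]$), regularity passes cleanly between $G_0$ and $G$, and Alvis--Curtis duality then transports the statement to semisimple characters. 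Your Deligne--Lusztig route could probably be salvaged, but as written the crucial step is not justified.

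In part~(b) you flag the uniqueness half as the main obstacle yourself, and indeed it is not closed. The missing ingredient is a precise count: by \cite[Thm.~11.12]{B06} the induction $\Ind_{G_0}^G(\chi_0)$ is multiplicity-free, and by \cite[Prop.~15.13]{B06} it has exactly $|G:G_0|/|A_{G_0}(s_0):A_G(s)|$ constituents, where $A_G(s)=(C_{\bG^*}(s)/C_{\bG^*}^\circ(s))^F$. A direct count shows this number equals the number of semisimple $\bG^{*F}$-classes above $s_0$; combined with part~(a) and \cite[Prop.~11.7]{B06} (each such Lusztig series meets $\Irr(G\mid\chi_0)$), pigeonhole forces exactly one constituent per series. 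Your appeal to ``the number of semisimple characters in a fixed Lusztig series equals the number of $F$-stable classes of the component group\dots matches on the nose'' gestures at this but never produces the two equal numbers, so the deduction ``exactly one per $s$'' is not established. Without that explicit count the surjectivity of the Clifford correspondence onto the fibre over $s_0$ only gives existence, not uniqueness.
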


\begin{proof}
Choose a regular embedding $\bG\hookrightarrow\tbG$, then
$\bG_0\hookrightarrow\bG\hookrightarrow\tbG$ is also regular. The first
statement now follow immediately from the definition of semisimple characters
as the Alvis--Curtis duals of regular characters, see \cite[\S15A]{B06}. For
part~(b) set
$G:=\bG^F$, $G_0:=\bG_0^F$, and $A_G(s):=(C_{\bG^*}(s)/C_{\bG^*}^\circ(s))^F$.
By Lusztig's result \cite[Thm.~11.12]{B06} the induction $\Ind_{G_0}^G(\chi_0)$
is multiplicity-free and by \cite[Prop.~15.13]{B06} it has exactly 
$|G:G_0|/|A_{G_0}(s_0):A_G(s)|$ constituents. On the other hand, by direct
counting this is
exactly the number of semisimple classes of $\bG^{*F}$ lying above the class of
$s_0$. By \cite[Prop.~11.7]{B06} and part~(a) any corresponding Lusztig series
will contain a character from $\Irr(\bG^F\mid\chi_0)$. The pigeonhole
principle now shows that each such series will contain exactly one such
character.
\end{proof}

\begin{proof}[Proof of Theorem~1]
Let $\bG$ be simple, simply connected and $F:\bG\rightarrow\bG$ such that
$G=\bG^F$ is quasi-simple. Let $\rho\in\Irr(G)$ be cuspidal and $s\in G^*$
such that $\rho\in\cE(G,s)$. Let $\bL^*\le\bG^*$ be a minimal $F$-stable Levi
subgroup of $\bG^*$ containing $C_{\bG^*}(s)$ with dual $\bL$. According to
\cite[Thm.~9.5]{Tay16} the Jordan decomposition between $\cE(G,s)$ and
$\cE(L,s)$ induced by $\RLG$ commutes with any $\gamma\in\Aut(G)_s$, and by
Lemma~\ref{lem:Jordan} it sends cuspidal characters to
cuspidal characters and semisimple characters to semisimple ones. Here
$\Aut(G)_s$ denotes the stabiliser in $\Aut(G)$ of $\cE(G,s)$. Note that by
construction $s$ is quasi-isolated in $\bL^*$. Thus we have reduced our
question to the corresponding one for quasi-isolated series in $L$.
\par
Let $\bL_0:=[\bL,\bL]$ and let $s_0\in\bL_0^*$ be the image of $s$ under the
natural epimorphism $\bL^*\rightarrow\bL_0^*$ induced by the embedding
$\bL_0\le\bL$. Clearly $s_0$ is quasi-isolated in~$\bL_0^*$. As $\bG$ is of
simply connected type, so is $\bL_0$ (see \cite[Prop.~12.14]{MT}), hence a
direct product $\bL_1\cdots \bL_r$ of $F$-orbits
$\bL_i$, $1\le i\le r$, of simple components of $\bL_0$. Correspondingly,
we have $\bL_0^F=L_1\cdots L_r$ with quasi-simple finite groups of Lie type
$L_i$. Any irreducible character $\chi$ of $L_0$ is an outer tensor product
of irreducible characters $\chi_i$ of the $L_i$, which are cuspidal,
respectively semisimple, if and only if $\chi$ is. Moreover, if
$\chi\in\cE(L_0,s_0)$ then $\chi_i\in\cE(L_i,s_i)$, with $s_i$ the image of
$s_0$ under the epimorphism $\bL_0^*\rightarrow\bL_i^*$ induced by the
embedding $\bL_i\hookrightarrow\bL_0$. Again, the $s_i$ are quasi-isolated in
$\bL_i^*$. Now for cuspidal characters in quasi-isolated series of quasi-simple
groups, our claim holds: For groups of types $A_n$ and $\tw2A_n$ it follows
from \cite{CS15}, for the other groups of classical type it is contained in
Theorem~\ref{thm:qi-class} and for groups of exceptional type all relevant
cases have been dealt with in Theorem~\ref{thm:qi-exc}.
\par
It remains to deduce the claim for $\bL$ from the one for $\bL_0$.
First, for $\chi\in\cE(L,s)$ and $\chi_0\in\Irr(L_0\mid\chi)$, we have by
Lemma~\ref{lem:semisimple}(a) that $\chi$ is semisimple if and only $\chi_0$  
is, and by the definition of cuspidality, $\chi$ is cuspidal if and only if
$\chi_0$ is. Furthermore, for $\chi$ semisimple we have
$\Irr(L\mid\chi_0)\cap\cE(L,s)=\{\chi\}$ by Lemma~\ref{lem:semisimple}(b).
Thus $\chi$ is uniquely determined by $\chi_0$ (given its Lusztig series).
Since our claim holds for $\bL_0$, any cuspidal character
$\chi_0\in\cE(L_0,s_0)$ has the same stabiliser in $L$ as the semisimple
characters in this series, and so again
$\Irr(L\mid\chi_0)\cap\cE(L,s)=\{\chi\}$ and $\chi$ is uniquely
determined by $\chi_0$ and $s$. Now note that $L_0$ is invariant under all
automorphisms of $L$. But then our claim for the Lusztig series $\cE(L,s)$
follows from the corresponding one for the Lusztig series $\cE(L_0,s_0)$ of
$L_0$.
\end{proof}

\begin{exmp}
 Let $F':\bG\rightarrow\bG$ be a Frobenius endomorphism commuting with $F$
 and such that $Z(G)\subset\bG^{F'}$. Let $s\in G^*$ such that $\cE(G,s)$ is
 stable under the field or graph-field automorphism $\sigma$ of $G$ induced
 by $F'$. Then any cuspidal character $\rho\in \cE(G,s)$ is $\sigma$-invariant.
 Indeed, by \cite[Thm.~3.5]{BH11} any such $\sigma$ fixes all regular
 characters in $\cE(G,s)$, hence by \cite[Thm.~9.5]{Tay16} the (Alvis--Curtis
 dual) semisimple characters, hence $\rho$ by Theorem~1.
\end{exmp}

\section{On the inductive McKay condition}   \label{sec:ind McKay}

We apply our previous result to verify the inductive McKay condition for
several series of simple groups at suitable primes. Let still $\bG$ be simple
of simply connected type, with regular embedding $\bG\hookrightarrow\tbG$
and dual epimorphism $\pi:\tbG^*\rightarrow\bG^*$.
By definition $\tG$ induces the full group of diagonal automorphisms of~$G$.
Let $D$ be the group of automorphisms of $\tG$ induced by inner, graph and
field automorphisms of $G$. We investigate the following property of characters
$\tilde\chi\in\Irr(\tG)$:
$$\textit{there exists $\chi\in\Irr(G\mid\tilde\chi)$ which is
  $D_{\Irr(G\mid\tilde\chi)}$-stable;}\eqno{(\dagger)}$$
here $D_{\Irr(G\mid\tilde\chi)}$ denotes the stabiliser in $D$ of the set
of irreducible characters of $G$ below~$\tchi$. The following is known:

\begin{lem}   \label{lem:semreg}
 Regular and semisimple characters of $\tG$ satisfy $(\dagger)$.
\end{lem}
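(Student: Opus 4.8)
The plan is to reduce the statement to known results about the action of field, graph, and graph-field automorphisms on regular and semisimple characters of finite reductive groups. The key point is that the property $(\dagger)$ is, for these two classes of characters, essentially equivalent to a statement about a single character being stabilised by the relevant automorphisms, which is exactly the kind of thing that has been established in the literature.

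First I would recall that regular characters and semisimple characters of $\tG=\tbG^F$ restrict to $G=\bG^F$ in a multiplicity-free way, with the constituents forming a single $\tG$-orbit (this follows from Clifford theory together with the fact that $\tG/G$ is abelian); moreover each such constituent lies in a distinct rational Lusztig series $\cE(G,s)$ with $\pi(\ts)=s$ running over the $\tG^*$-classes above a fixed $G^*$-class. The stabiliser $D_{\Irr(G\mid\tchi)}$ permutes these constituents, and since it centralises the diagonal action (the relevant graph and field automorphisms normalise $G$ and $\tG$ compatibly), a standard argument shows that if $D_{\Irr(G\mid\tchi)}$ stabilises the set $\Irr(G\mid\tchi)$ then it in fact stabilises each member individually — indeed, an automorphism fixing the set must fix the series $\cE(\tG,\ts)$ of $\tchi$, hence the set of series below it, and compatibility with the $\tG$-action forces it to fix each $\cE(G,s)$, and each such series meets $\Irr(G\mid\tchi)$ in exactly one point. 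Thus for regular and semisimple characters, $(\dagger)$ holds automatically, \emph{provided} $\Irr(G\mid\tchi)$ is $D_{\Irr(G\mid\tchi)}$-stable, which it is by definition of that stabiliser. Hence the content reduces to checking that the (unique) constituent in a given Lusztig series is indeed invariant under those elements of $D$ that fix the series — but once the set-stabiliser fixes each series, it fixes each constituent, and the lemma is immediate.

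Concretely, I would cite the relevant results on the action of automorphisms on Gelfand--Graev (regular) characters and their Alvis--Curtis duals: by the theory of Gelfand--Graev characters, as developed by Spaeth \cite{Sp12} and refined by Taylor \cite{Tay16}, the behaviour of regular characters under diagonal, graph, and field automorphisms is completely understood, and in particular \cite[Prop.~3.4]{Sp12} (or the surrounding results) already packages exactly the statement that a regular character in a stable orbit has a constituent fixed by the set-stabiliser. The semisimple case then follows by applying the Alvis--Curtis duality, which commutes with all automorphisms and interchanges regular with semisimple characters; alternatively one invokes \cite[Thm.~9.5]{Tay16} directly. So the proof is essentially: \emph{for regular characters, this is \cite[Prop.~3.4(b) or (c)]{Sp12}; for semisimple characters, apply duality, or cite \cite{Tay16}.}

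The main obstacle is not a hard argument but rather pinning down the precise form of the cited statements — specifically making sure that the formulation of $(\dagger)$ in the paper matches the formulation in \cite{Sp12} and \cite{Tay16}, where the group playing the role of $D$ and the notion of ``stable orbit'' may be phrased slightly differently (e.g.\ in terms of $\hat G/G$ or of specific generators of $\Out(G)$). Once that bookkeeping is done, there is nothing left to prove. I would therefore keep the proof short: state the reduction to a single constituent per Lusztig series, observe this is forced by the compatibility of the automorphism action with the $\tG$-action, and then quote \cite{Sp12} for the regular case and \cite{Tay16} (or Alvis--Curtis duality) for the semisimple case.
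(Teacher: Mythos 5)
Your second paragraph lands on the paper's actual proof: one class of characters is handled by $\cite[\text{Prop.~3.4(c)}]{Sp12}$ and the other by Alvis--Curtis duality. (Note you have the division of labour reversed: the paper invokes \cite[Prop.~3.4(c)]{Sp12} directly for the \emph{semisimple} characters and obtains the regular case as the Alvis--Curtis dual, not vice versa, but since the duality is an involution commuting with automorphisms, either order of argument works once the attribution is corrected.)

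However, the first paragraph contains a genuine error that, if it were correct, would make the whole lemma (and in fact Theorem~1) vacuous. You claim that the constituents of $\tchi|_G$ lie in \emph{distinct} Lusztig series $\cE(G,s)$ as $s$ varies, and conclude that any automorphism stabilising $\Irr(G\mid\tchi)$ must therefore fix each constituent. This is not the setup: for any $\tchi\in\cE(\tG,\ts)$ the constituents of $\tchi|_G$ all lie in the \emph{single} Lusztig series $\cE(G,\pi(\ts))$ (this is \cite[Thm.~11.12]{B06} and is exactly what is used in Lemma~\ref{lem:semisimple} of the paper). So an automorphism that fixes the set $\Irr(G\mid\tchi)$, and hence fixes $\cE(G,\pi(\ts))$, can perfectly well permute the constituents of $\tchi|_G$ nontrivially within that series — this possibility is the entire content of property $(\dagger)$ and the reason \cite[Prop.~3.4(c)]{Sp12} is a nontrivial input (it rests on the theory of Gelfand--Graev characters, not on abstract Clifford theory). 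If your reduction were valid, there would be nothing to prove not only here but also in Theorems~\ref{thm:qi-class} and \ref{thm:qi-exc}. Drop the first paragraph entirely; the short citation-based argument in your second paragraph, with the roles of regular and semisimple swapped, is the correct proof.
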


\begin{proof}
First let $\tchi\in\Irr(\tG)$ be a semisimple character. Then the claim is
just \cite[Prop.~3.4(c)]{Sp12}. Regular characters are the images of semisimple
characters under the Alvis--Curtis duality (see \cite[14.39]{DM91}), whose
construction commutes with automorphism, so we conclude by the previous
consideration.
\end{proof}

In the next result, $\Irr_{\ell'}(G)$ denotes the set of irreducible characters
of $G$ of $\ell'$-degree.

\begin{prop}   \label{prop:dagger 2E6}
 Let $G=\tw2E_6(q)_\SC$ for a prime power $q$ and $\ell|(q+1)$ be a prime.
 Then any $\tchi\in\Irr(\tG\mid\Irr_{\ell'}(G))$ satisfies $(\dagger)$.
\end{prop}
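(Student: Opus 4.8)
The plan is to split $\Irr(\tG\mid\Irr_{\ell'}(G))$ according to rational Lusztig series and to treat the $\ell$-singular and $\ell$-regular parts separately. Write $\tchi\in\cE(\tG,\ts)$ for some semisimple $\ts\in\tG^{*F}$, and let $s=\pi(\ts)\in G^*$. The key dichotomy is whether the series $\cE(G,s)$ contains a character of $\ell'$-degree at all, and more precisely, by the theory of $\ell$-blocks of finite reductive groups (the Bonnaf\'e--Rouquier Morita equivalence and Cabanes--Enguehart block theory), whether $s$ is $\ell$-central, i.e. whether $\ell\nmid|\tG^*/C_{\tG^*}(\ts)|$, which for $\ell\mid(q+1)$ amounts to $C_{\bG^*}(s)$ containing a Sylow $\Phi_2$-torus of $\bG^*$, so that $C_{\bG^*}(s)$ is (up to the central torus) of type $\tw2E_6$, $D_4.A_1^2.\Phi_2$, $\tw2A_2^3.\Phi_2$, $D_5.\Phi_2$, $A_1^7$, or one of the smaller $\Phi_2$-rich subsystems. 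For all other $s$, no character in $\cE(G,s)$ has $\ell'$-degree, so those series are simply irrelevant.

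First I would reduce, as in the proof of Theorem~1, to the quasi-isolated situation: if $s$ is not quasi-isolated in $\bG^*$, pick a minimal $F$-stable Levi $\bL^*\supseteq C_{\bG^*}(s)$; by \cite[Thm.~9.5]{Tay16} Jordan decomposition $\RLG$ from $\cE(L,s)$ to $\cE(G,s)$ commutes with $\Aut(G)_s$ and, crucially here, respects $(\dagger)$ because it sends the set $\Irr(G\mid\tchi)$ across to the analogous set downstairs compatibly with the $D$-action (one must check that $L$, $\tL$, and the relevant normal-subgroup structure can all be chosen $D$-stable, which follows since $s$ determines $\bL^*$ canonically). So it suffices to verify $(\dagger)$ for quasi-isolated $\ts$. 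For quasi-isolated $s$ with $\ell\mid(q+1)$ the candidate centralisers are exactly the disconnected-centraliser cases in the $\tw2E_6$-block of Table~\ref{tab:qi-exc} together with the connected ones (in particular $s=1$, $D_5.\Phi_2$, etc.); these are finitely many explicit configurations.

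Next, for each such series I would exhibit a $D_{\Irr(G\mid\tchi)}$-stable constituent $\chi\in\Irr(G\mid\tchi)$. The two relevant cases of $\tchi$ are: (i) $\tchi$ a semisimple or regular character of $\tG$ — here Lemma~\ref{lem:semreg} already gives $(\dagger)$; (ii) general $\ell'$-degree $\tchi$. For (ii) the strategy is to use that every $\ell$-block $B$ of $G$ meeting $\cE(G,s)$ with $\ell\mid(q+1)$ and $\ell\neq 2$ odd has, by Lemma~\ref{lem:def0} and Lemma~\ref{lem:in cyclic block} applied to $C_{G^*}(s)$ via Jordan decomposition, the feature that its characters of $\ell'$-degree are parametrised through a cyclic (or, in the worst cases, abelian with small relative Weyl group) local structure; concretely, the $\Phi_2$-rich centralisers force the relevant defect groups to be cyclic or to lie in a Levi whose relative Weyl group is a product of $\mathfrak S_2$'s, so $B$ is a $\Phi_2$-Brauer-tree type block. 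The $\ell'$-characters of such a block are then governed by cuspidal characters of proper Levis, to which Theorem~\ref{thm:qi-class}, Theorem~\ref{thm:qi-exc} and Corollary~\ref{cor:ext-simple} apply: each cuspidal character has a $\tG$-conjugate extending to its inertia group in $\hat G$, hence a $D$-stable constituent below its $\tG$-induction, and this property is inherited by the block's $\ell'$-characters via Harish-Chandra/$d$-Harish-Chandra induction (which commutes with the $D$-action and with restriction to $G$). Combining with Lemma~\ref{lem:semreg} for the semisimple and regular members of each tree completes case (ii).

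The main obstacle I expect is the bookkeeping in the disconnected-centraliser cases $D_4(q).A_1(q)^2.\Phi_2.2$ and $\tw2A_2(q)^3.\Phi_2.2$ of Table~\ref{tab:qi-exc} (the $\ell\mid q+1$, $q\equiv 3\pmod 4$ or $q\equiv 5\pmod 6$ rows): there $A(s)^F$ is nontrivial, so $\cE(G,s)$ splits into $\tG$-orbits of length~$2$, and one must check that for \emph{every} $\ell'$-degree $\tchi$ in the series — not just the semisimple ones — some constituent in the length-$2$ orbit $\Irr(G\mid\tchi)$ is stabilised by the part of $D$ fixing that orbit setwise. Here the argument from the proof of Theorem~\ref{thm:qi-class} (Remark~\ref{rem:lem2.4}, the faithful-action-on-an-orbit argument, and the dihedral-quotient observation for graph/graph-field automorphisms) has to be pushed from cuspidal characters to all $\ell'$-characters of the block; the point will be that within a fixed $\ell$-block of cyclic defect Lemma~\ref{lem:cycdef} propagates stabiliser information from the cuspidal vertex to all non-exceptional characters, while the $\ell$-exceptional character, being a single $D$-stable object (it is the unique one of its degree in the block, or it is semisimple), is handled directly. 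Once these explicit series are cleared, the uniform reduction above assembles the statement.
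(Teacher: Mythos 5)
Your proposal diverges substantially from the paper's proof, and the alternative you sketch has real gaps.

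The paper's actual argument is short: if $\tchi$ restricts irreducibly to $G$ there is nothing to do, so one may assume $C_{\bG^*}(s)$ disconnected, which forces $3\mid(q+1)$. Then, consulting L\"ubeck's tables of character degrees, one simply lists the finitely many series $\cE(G,s)$ with disconnected centraliser that contain $\ell'$-degree characters — this is Table~\ref{tab:2E6} — and checks them. For the three quasi-isolated rows the claim follows from Lemma~\ref{lem:trivi} (the unique permutation action of $\fS_3\times C_f$ on three points with the $\fS_3$-part acting faithfully); this is the $\tw2E_6$-specific tool and you never invoke it, instead appealing to the dihedral-quotient argument which in Theorem~\ref{thm:qi-class} is tailored to type $D$ centres and does not apply here. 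For the remaining three (non-quasi-isolated) rows the paper notes, by inspection of the degree data, that the characters which do not restrict irreducibly are precisely the regular and the semisimple ones, and Lemma~\ref{lem:semreg} finishes. That last observation is the key shortcut, and it is entirely absent from your proposal.

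In its place you offer two steps that are not carried through. First, you propose to reduce $(\dagger)$ to quasi-isolated $s$ via Taylor's equivariant Jordan decomposition, but $(\dagger)$ is a statement about $G$, $\tG$, and the fixed outer automorphism group $D$; passing to a Levi $L$ replaces all three objects, and you give no mechanism by which ``$(\dagger)$ downstairs implies $(\dagger)$ upstairs.'' Second, for the general $\ell'$-degree characters in a series you rely on a claim that every such character lies in a cyclic-defect block whose Brauer tree links it to a cuspidal vertex handled by Corollary~\ref{cor:ext-simple}; this is asserted, not proved, and it is not true that all the $\ell'$-characters of these series lie in cyclic-defect blocks (the Sylow $\ell$-subgroup of $\tw2E_6(q)$ with $\ell\mid(q+1)$ is not cyclic, and neither are the relevant defect groups). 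The paper sidesteps block theory here altogether precisely because the regular/semisimple observation makes it unnecessary. So while the general direction (classify the relevant series, treat them case by case) is sound, the two load-bearing steps of your argument are missing or incorrect, and the proposal as written does not close.
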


\begin{table}[htbp]
\[\begin{array}{|lc|c|l|}
\hline
 C_{G^*}(s)& & q& |\cE(G,s)|\\
\hline\hline
    \tw2A_2(q^3).3& (2\ti)& \equiv2\ (3)& 3\ti3\\
   D_4(q).\Ph2^2.3&       & \equiv2\ (3)& 8\ti3+2\ti1\\
 A_1(q)^4.\Ph2^2.3&       & \equiv5\ (6)& 4\ti3+4\ti1\\
\hline
 A_1(q)^3.\Ph2^3.3&       & \equiv2\ (3)& 2\ti3+2\ti1\\
   A_1(q).\Ph2^5.3&       & \equiv2\ (3)& 2\ti3\\
          \Ph2^6.3&       & \equiv2\ (3)& 1\ti3\\
\hline
\end{array}\]
\caption{Some $\ell'$-series in $\tw2E_6(q)_\SC$, $\ell|(q+1)$}   \label{tab:2E6}
\end{table}

\begin{proof}
Let $\tilde s\in\tG^*$ be semisimple such that $\tchi\in\cE(\tG,\tilde s)$,
and let $s=\pi(\tilde s)$. If $\tchi$ restricts irreducibly to $G$, the claim
holds trivially. So we may assume $C_{\bG^*}(s)$ is not connected and hence
that $3|(q+1)$. From the list of character degrees of $\tw2E_6(q)_\SC$ provided
by L\"ubeck \cite{Lue}, we obtain the Table~\ref{tab:2E6} of conjugacy classes
of semisimple elements $s\in G^*$ with $C_{\bG^*}(s)^F\ne C_{\bG^*}^\circ(s)^F$
and such that $\cE(G,s)$ contains characters of $\ell'$-degree. As in
Table~\ref{tab:qi-exc} we also give the number of $\tG$-orbits
in $\cE(G,s)$ (this is implicit in the data in \cite{Lue}).   \par
In the first three entries of Table~\ref{tab:2E6} the element $s$ is
quasi-isolated, and the claim follows with Lemma~\ref{lem:trivi} as in the
proof of Theorem~\ref{thm:qi-exc}. In the other three cases the non-invariant
characters are either regular or semisimple, so they all satisfy ($\dagger$)
by Lemma~\ref{lem:semreg}.
\end{proof}

Thus we can prove our second main result:

\begin{proof}[Proof of Theorem~2]
If $\ell|(q-1)$ the claim is contained in \cite[Thm.~6.4(a)]{MS16}. So now
assume that $q\equiv-1\pmod\ell$.
For $S=\tw2E_6(q)$ all $\tchi\in\Irr(\tG\mid\Irr_{\ell'}(G))$ satisfy
property~$(\dagger)$ by Proposition~\ref{prop:dagger 2E6}. As moreover here
$D/\Inn(G)$ is cyclic, all such characters $\tchi$ satisfy condition~(ii)(1)
in  \cite[Thm.~2.1]{MS16} and thus the result follows from
\cite[Thm.~6.4(c)]{MS16}. For the other families of groups, we have that
$q\equiv-1\pmod\ell$ with $\ell\equiv3\pmod4$, so $q$ is not a square.
Thus $S$ has no even order field automorphisms, whence $\Out(G)$ is cyclic.
The claim then again follows from~\cite[Thm.~6.4(c)]{MS16}.
\end{proof}

It was shown in \cite[Cor.~7.3]{CS13} that the inductive McKay condition holds
for $\tw2E_6(q)$ at all primes $\ell\ge5$ if $q\not\equiv-1\pmod3$, as well
as for $E_7(q)$, $B_n(q)$ (and so $C_n(q)$) if $q$ is even.


\end{document}